\numberwithin{equation}{section}
\theoremstyle{plain} %default
\newtheorem{theorem}{Theorem}[section]
\newtheorem{lemma}[theorem]{Lemma}
\newtheorem{prop}[theorem]{Proposition}
\theoremstyle{definition}
\newtheorem{definition}[theorem]{Definition}
\newtheorem{assumption}[theorem]{Assumption}
\theoremstyle{remark}
\newtheorem{remark}[theorem]{Remark}
\newcommand{\N}{\mathbb{N}}
\newcommand{\R}{\mathbb{R}}
\newcommand{\C}{\mathbb{C}}
\newcommand{\Z}{\mathbb{Z}}
\newcommand{\Q}{\mathbb{Q}}
\newcommand{\PP}{\mathbb{P}}
\newcommand{\bt}{\boldsymbol{t}}
\newcommand{\PT}{\mathbb{P}^1_{3,3,3}}
\newcommand{\PH}{\mathbb{P}^1_{2,3,6}}
\newcommand{\PQ}{\mathbb{P}^1_{2,4,4}}
\newcommand{\PB}{\mathbb{P}^1_{2,2,2,2}}
\newcommand{\ind}{\rm ind}
\newcommand{\CX}{\mathcal{X}}
\newcommand{\CY}{\mathcal{Y}}
\newcommand{\CM}{\mathcal{M}}
\newcommand{\WT}[1]{\widetilde{#1}}
\newcommand{\OL}[1]{\overline{#1}}
\newcommand{\Hom}{{\rm Hom}}
\newcommand{\CO}{\mathcal O}
\newcommand{\D}[1]{\Delta_{#1}}
\newcommand{\pt}{[\rm{pt}]}
\begin{document}

\author[Hong]{Hansol Hong}
\address{Department of Mathematics/The Institute of Mathematical Sciences	 \\ The Chinese University of Hong Kong\\Shatin\\New Territories \\ Hong Kong}
\email{hhong@ims.cuhk.edu.hk, hansol84@gmail.com}

\author[Shin]{Hyung-Seok Shin}
\address{School of Mathematics, Korea Institute for Advanced Study, 85 Hoegiro Dongdaemun-gu, Seoul 02455, Republic of Korea}
\email{hsshin@kias.re.kr}

\title[Holomorphic orbi-spheres in $\PB$]{Counting of Holomorphic Orbi-spheres in $\PB$ and Determinant Equation}
\maketitle
\begin{abstract}
We count the number of holomorphic orbi-spheres in the $\Z_2$-quotient of an elliptic curve. We first prove that there is an explicit correspondence between the holomorphic orbi-spheres and the sublattices of $\Z \oplus \Z \sqrt{-1}  (\subset \C)$. The problem of counting sublattices of index $d$ then reduces to find the number of integer solutions of the equation $\alpha \delta - \beta \gamma = d$ up to an equivalence.
\end{abstract}

\tableofcontents
%----------------------------------------------------------
\section{Introduction}
The generating functions of Gromov-Witten invariants (GW potentials in short) on 1-dimensional Calabi-Yau orbifolds were shown to be quasi-modular forms (or their generalizations for some higher dimensional Calabi-Yau). For example
Milanov-Ruan \cite{MR} proved the quasi-modularity
for elliptic orbifold projective lines $\PT$, $\PQ$ and $\PH$, and later Shen and Zhou \cite{SZ} proved it for all one-dimensional compact Calabi-Yau that includes $\PB$.
(See also \cite{LZ} for the modularity of open GW potentials of these orbifolds.)

It draws our attention that one can observe a number theoretic phenomenon in the curve counting problem for elliptic orbifolds, and this paper as well as our preceding work \cite{HS} aims to see this more straightforwardly. In \cite{HS}, we studied classification of  holomorphic orbi-spheres which contribute to (small) quantum product of three elliptic orbifold projective lines $\PT$, $\PQ$ and $\PH$. While the full GW potentials had previously been computed making use of algebraic techniques such as WDVV-equations (see \cite{KS} for potentials of $\PT$, $\PQ$ and $\PH$ for all genera, and \cite{ST} for genus-0 potentials of $\PT$ and $\PB$), our method in \cite{HS} was based on elementary counting so that it reveals more directly a combinatorial feature of the GW invariants for those elliptic orbifolds. 

Interestingly, there is a correspondence between those invariants (3-point correlators in Gromov-Witten invariants) and the number of integer solutions of certain quadratic Diophantine equations.
Consequently, we were able to show that the structure constants for quantum products for $\PT,\PQ$ and $\PH$ are theta functions in K\"ahler parameter $q$, which directly implies their modularities.

In this short article, we classify holomorphic orbi-spheres in $\PB$ applying the similar technique as in \cite{HS}. We emphasize that our goal is not to compute the GW potential itself, but rather, to examine an elementary, yet more geometric technique to count holomorphic spheres that contribute to the GW potential.  We will see that the counting  problem reduces to a simple combinatorial question, that is finding the number of integer solutions for certain equations.

The main difference from the previous work \cite{HS} is as follows.
As we were only interested in 3-point correlators (for quantum products) in \cite{HS}, we fixed the complex structure of the domain orbifold Riemann surface using equivalence of stable maps (by fixing the position of three orbifold singular points of the domain).
On the other hand, the domain orbifolds in the case of $\PB$ are orbi-spheres with four cone-type singularities of order $2$ equipped with a complex structure, and there is  a family of such domains which are not bi-holomorphic to each other.
In fact, the moduli space of the domain orbifolds is isomorphic to the moduli of elliptic curves, which can be described in terms of $\Z$-lattices in $\C$.

Combining this description of the moduli of domains and the lifting theory of {\it orbi-map}s to universal covering spaces of $\PB$ (which is  simply $\C$), we shall prove that the curve-counting problem is again equivalent to finding the number of integer solutions (up to gauge) of some quadratic equation. Here is our main theorem in this paper.

\begin{theorem}
The 4-point non-zero degree $d$ Orbifold Gromov-Witten invariant of $\PB$ is the count of integer solutions $(\alpha,\beta,\gamma,\delta)$ of
$$ \alpha \gamma - \beta \delta = d$$
up to the linear action of ${\rm SL} (2,\Z)$.
\end{theorem}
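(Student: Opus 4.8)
The plan is to reduce the count of degree-$d$ holomorphic orbi-spheres to a lattice count in three steps: present both the admissible domains and the target $\PB$ as global $\Z/2$-quotients of elliptic curves; lift a holomorphic orbi-map to the universal covers $\C\to\C$ and show it must be affine; and then repackage the resulting data as an index-$d$ sublattice of $\Z\oplus\Z\sqrt{-1}$, hence as a solution of the determinant equation modulo ${\rm SL}(2,\Z)$. First I would record the two uniformizations. A genus-zero orbifold Riemann surface $\mathcal C$ with four cone points of order $2$ is a good (indeed flat) orbifold: its orbifold universal cover is $\C$ and $\pi_1^{\mathrm{orb}}(\mathcal C)\cong\Lambda'\rtimes\Z/2$ with the $\Z/2$ acting by $z\mapsto-z$, so $\mathcal C\cong E'/(\pm1)$ for the elliptic curve $E'=\C/\Lambda'$, the four orbifold points being the images of the $2$-torsion of $E'$. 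Thus the moduli of such domains is that of $\Lambda'$ up to scaling (essentially the moduli of four points on $\mathbb{P}^1$, one-dimensional). The target is fixed: $\PB=E/(\pm1)$ with $E=\C/\Lambda$, $\Lambda=\Z\oplus\Z\sqrt{-1}$, and $\pi_1^{\mathrm{orb}}(\PB)\cong\Lambda\rtimes\Z/2$, again with universal cover $\C$.

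Next, given a holomorphic orbi-map $f\colon\mathcal C\to\PB$ of degree $d\neq0$, I would invoke the lifting theory of orbi-maps: since the target is aspherical, $f$ lifts to a holomorphic $\widetilde f\colon\C\to\C$ equivariant for a homomorphism $\phi\colon\Lambda'\rtimes\Z/2\to\Lambda\rtimes\Z/2$. As $\widetilde f$ is nonconstant, $\phi$ cannot interchange translations and reflections, so it respects the splitting; equivariance under $\Lambda'$ then forces $\widetilde{f}'$ to be $\Lambda'$-periodic, hence to descend to a holomorphic function on the compact curve $E'$, hence constant. Therefore $\widetilde f(z)=az+b$ with $a\neq0$. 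Feeding this back, equivariance under $\Lambda'$ gives $\phi|_{\Lambda'}(\lambda')=a\lambda'$, so $a\Lambda'\subseteq\Lambda$, and matching the reflection part gives $2b\in\Lambda$, i.e. $b\in\tfrac12\Lambda$ -- exactly the statement that the base orbifold point of $\mathcal C$ goes to one of the four orbifold points of $\PB$. Comparing covolumes (equivalently, counting preimages on the coarse $\mathbb{P}^1$'s) identifies the degree with the index, $d=[\Lambda:a\Lambda']$; note this also shows there are no ``exotic'' non-affine holomorphic orbi-spheres, since $\widetilde f$ is then an unramified cover. Conversely, every triple $(\Lambda',a,b)$ with $[\Lambda:a\Lambda']=d$ and $b\in\tfrac12\Lambda$ descends to such an orbi-map.

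Now I would quotient by reparametrizations of the ordered four-pointed domain. These lift to affine maps $z\mapsto uz+c$ that normalize $\Lambda'\rtimes\Z/2$ and fix the four orbifold points; a direct check shows the only genuine freedom is the rescaling $(\Lambda',a)\mapsto(u^{-1}\Lambda',au)$, while the possible base-point shifts are absorbed using $2b\in\Lambda$ (and $b$ itself is pinned by which orbifold point of $\PB$ carries which insertion). The surviving invariant of $f$ is thus exactly the sublattice $L:=a\Lambda'\subseteq\Lambda$ of index $d$, which establishes the asserted correspondence between degree-$d$ holomorphic orbi-spheres and index-$d$ sublattices of $\Z\oplus\Z\sqrt{-1}$. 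To conclude, record a $\Z$-basis of $L$, in the coordinates given by $1$ and $\sqrt{-1}$, as a $2\times2$ integer matrix; its determinant has absolute value $[\Lambda:L]=d$, which after naming the entries $\alpha,\beta,\gamma,\delta$ as in the statement reads $\alpha\gamma-\beta\delta=d$, and two matrices yield the same $L$ precisely when they differ by a change of basis of $L$, i.e. by the ${\rm SL}(2,\Z)$-action. Hence the degree-$d$ count equals the number of integer solutions of $\alpha\gamma-\beta\delta=d$ up to ${\rm SL}(2,\Z)$.

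The main obstacle I anticipate is the bookkeeping in the last two stages. One must make the lifting theory precise enough to guarantee that \emph{every} holomorphic orbi-sphere into $\PB$ arises as above -- the flatness and asphericity of $\PB$ is what makes this work -- and, more delicately, one must verify that the virtual count defining the Orbifold Gromov-Witten invariant coincides with the naive cardinality of this correspondence. Because $\PB$ is a one-dimensional Calabi-Yau and the four twisted insertions cut the virtual dimension to zero, the moduli spaces in play are finite and unobstructed once the evaluation constraints are imposed, so no obstruction bundle intervenes; but the interplay between automorphisms of the orbi-maps, the ordering of the four orbifold/marked points, and the residual choice of base point $b$ has to be matched exactly against the ${\rm SL}(2,\Z)$-orbit count, and this matching is where the argument requires the most care.
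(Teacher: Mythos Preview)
Your plan is essentially the paper's: uniformize domain and target as $\Z/2$-quotients of elliptic curves, lift a holomorphic orbi-sphere to $\C\to\C$, show the lift is affine, and read off an index-$d$ sublattice of $\Lambda_0=\Z\oplus\Z\sqrt{-1}$, which is then a matrix of determinant $d$ modulo ${\rm SL}(2,\Z)$. The one noteworthy difference is how affinity of the lift is obtained. The paper first proves, via a Riemann--Hurwitz argument (Lemma~\ref{lem:RHformula}, carried out in Appendix~\ref{app:orbcovth}), that any such $u$ is an orbifold covering map; uniqueness of the universal cover then forces $\widetilde u$ to be a biholomorphism of $\C$, hence linear after the base-point normalization. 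Your route---observing that equivariance under the lattice makes $\widetilde f\,'$ doubly periodic and hence constant on the compact curve $E'$---is more direct and bypasses Riemann--Hurwitz entirely; it is a genuine simplification of that step. The remaining bookkeeping you correctly flag (the $S_3$-ambiguity in the last three markings, which the paper packages as the $6$-to-$1$ map $\Theta_d$ over $\mathcal L_d(\Lambda_0)$; the regularity making the virtual count equal to the actual cardinality; and the exclusion of nodal contributions, which your argument does not explicitly address) is handled in the paper by Proposition~\ref{thm:sublattice_corr}, Appendix~\ref{app:trans}, and Lemma~\ref{Lem:degenerate} respectively.
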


In fact, the number of solutions of the above equation only gives the sum of all contributions to the Gromov-Witten invariants with fixed degree $d$, but the precise Gromov-Witten invariant can be easily deduced from this after elementary combinatorics to be made in Section \ref{Sec:computation}.
More rigorous statement of the theorem will be shown in Section \ref{sec:mainsec_sphcount} (see Theorem \ref{thm:lumpsum}).

The organization of the paper is as follows.
Section \ref{sec:prelim} gives a brief review on the orbifold Gromov-Witten theory and the computational result on $\PB$ in \cite{ST}.
The main part of the paper is Section \ref{sec:mainsec_sphcount} where we classify the holomorphic orbi-spheres in $\PB$ and study their equivalence relations.
In Section \ref{Sec:computation}, we compute 4-points Gromov-Witten invariants of $\PB$ precisely, using the classification result in Section \ref{sec:mainsec_sphcount}.
In Appendix \ref{app:trans}, we prove regularity of the moduli space of non-trivial holomorphic orbi-spheres contributing to the 4-point Gromov-Witten invariants of $\PB$, and
Appendix \ref{app:orbcovth} gives a brief review on orbifold covering theory.

\subsection*{Notation}
Throughout the article, we will mainly deal with the holomorphic maps $u : \PB(\Lambda) \to \PB(\Lambda_0)$ between two $\PB$'s but with different complex structures.
\begin{equation*}
\begin{array}{ll}
x_j :& \mbox{orbi-points in the target}\,\, \PB (\Lambda_0)\\
y_j :& \mbox{orbi-points in the domain}\,\, \PB (\Lambda)\\
z_j :& \mbox{orbi-marked points in the domain}\,\, \PB (\Lambda).
\end{array}
\end{equation*}
($ \{ y_1,y_2,y_3,y_4\} = \{z_1,z_2,z_3,z_4\}$ but $y_i \neq z_i$ in general.)

\subsection*{Acknowledgement}
We express our gratitude to Cheol-Hyun Cho who brought our attention to the subject. The first author thanks Naichung Conan Leung and Yalong Cao for comments and references.
The second author thanks Myungho Kim, Hyenho Lho, and Jeongseok Oh for valuable discussions.

%----------------------------------------------------------
\section{Preliminaries}\label{sec:prelim}
In this section, we give a precise definition of the orbifold $\PB$ which will be studied throughout the article, and briefly review the orbifold Gromov-Witten theory developed by Chen and Ruan.

\subsection{The orbifold representation of $\PB$}

Topologically $\PB$ is a sphere with four cone points each of which has $\Z_2$-singularity. (It is sometimes called ``pillowcase" by its shape. See (b) of Figure \ref{fig:2222intro}.) In this section, we give a concrete description of its orbifold structure for later use, and fix it to be the orbifold representation of $\PB$ throughout the article.

Let $E_0$ be an elliptic curve which is associated with the integer lattice $\Lambda_0 := \Z \cdot 1 + \Z \cdot \sqrt{-1}$ in $\R^2$. Observe that $\Lambda_0$ is invariant under a holomorphic involution $\sigma : z \mapsto -z$ on $\C$. Therefore $E_0$ admits an action of $\Z_2$ generated by $\sigma$. We finally define $\PB$ to be the quotient orbifold $[E_0 / \Z_2]$ which inherits a complex structure from $E_0$. We denote this complex structure by $J_{std}$, and fix it once for all.
\begin{remark}\label{rmk:jonpb}
Any elliptic curve $\C / \Lambda$ admits the same type of (holomorphic) $\Z_2$-action. Its quotient is still diffeomorphic to $\PB$, but not biholomorphic to $(\PB,J_{std})$ in general.
The induced complex structure on $\PB$ from the elliptic curve therefore depends on the choice of the lattice $\Lambda$. We will revisit this issue in \ref{subsec:cpxstrdomain}.
\end{remark}

Consider the universal covering map $\C \to  E_0(=\C / \Lambda_0 )$. The composition with the quotient map $E_0 \to [E_0 / \Z_2]$ gives an orbifold (universal) covering
$$p : \C \to E_0 \to [E_0 / \Z_2].$$
(See for e.g. \cite{Thu} for detailed explanation on orbifold covering theory.)
Fundamental domain of $E_0$ and $\PB$ in the universal cover $\C$ is depicted in (a) of Figure \ref{fig:2222intro}.

\begin{figure}
\begin{center}
\includegraphics[height=2in]{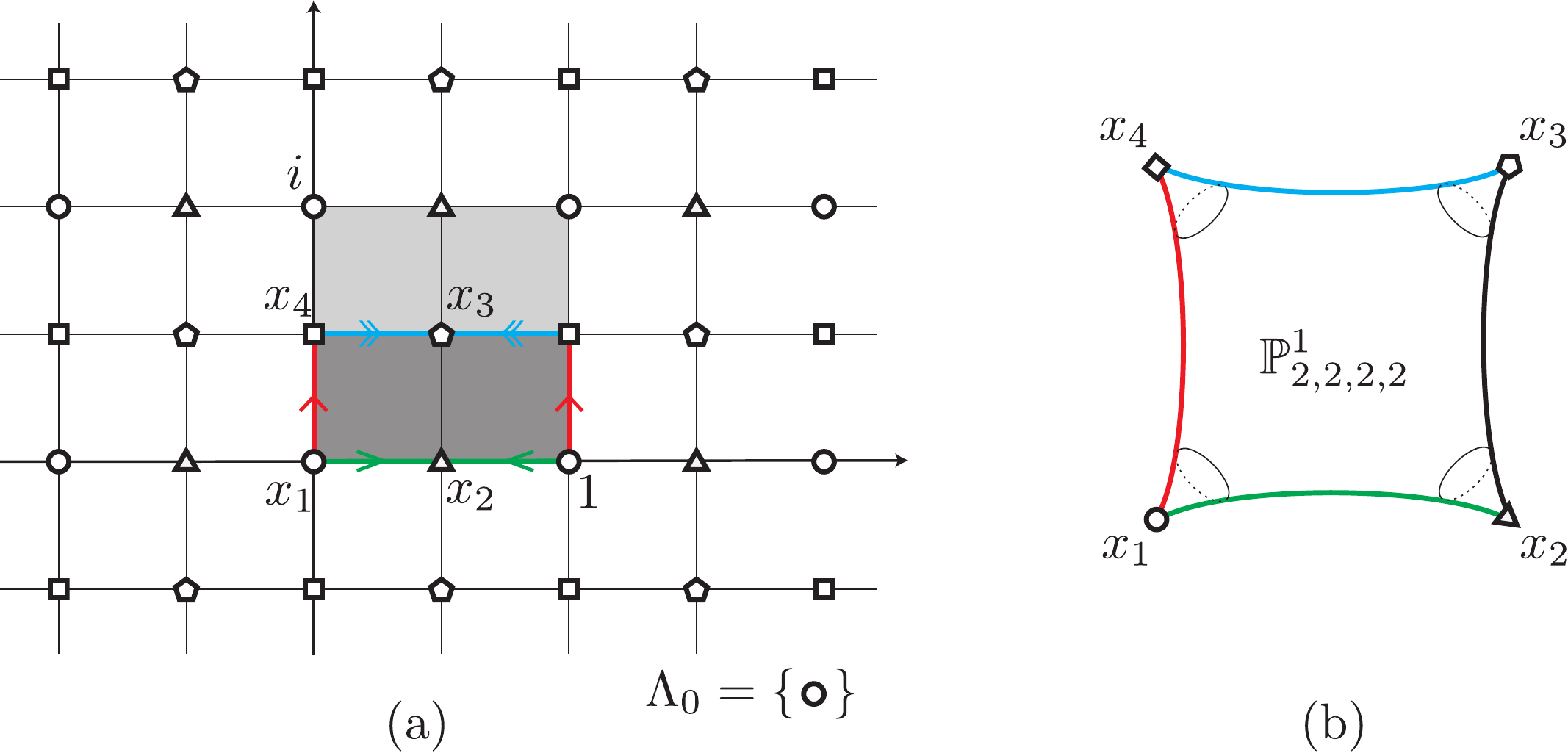}
\caption{(a) Fundamental domain of $E_0$ and $\PB$ in $\C$ and (b) $\PB$}\label{fig:2222intro}
\end{center}
\end{figure}

We denote four orbifold points in $\PB$ by $x_1, x_2, x_3, x_4$, which are arranged as follows:
\begin{equation}\label{eqn:arrangementxi}
\begin{array}{l l}
p^{-1}(x_1) = \Lambda_0,&
p^{-1}(x_2) = \frac{1}{2} + \Lambda_0,\\
p^{-1}(x_3) = \frac{1+\sqrt{-1}}{2} + \Lambda_0,&
p^{-1}(x_4) = \frac{\sqrt{-1}}{2} + \Lambda_0.
\end{array}
\end{equation}
One can easily see that they correspond to four fixed points for the $\Z_2$-action on $E_0$.

The orbifold cohomology ring $H^*_{orb} (\PB, \Q)$ is generated by cycles in twisted sectors as well as those in $|\PB|\cong S^2$, which we denote by
\begin{equation}\label{eqn:PBHbasis}
\left\{\mathbf{1}, \D{1}, \D{2}, \D{3}, \D{4}, \pt \right\}.
\end{equation}
Here, $\mathbf{1}$ ($\deg=0$) and $\pt$ ($\deg=2$) are the Poincar\'e dual of the fundamental class and the point class in the smooth sector respectively, and the others come from twisted sectors at four $\Z_2$-singular points.

$H^*_{orb} (\PB, \Q)$ admits a ring structure via Chen-Ruan cup product ``$\cup$" which preserves degrees. We refer readers to \cite{CR1} for general theory of orbifold cohomology.
The cohomology class $\D{j}$ is a unique $\deg=1$ element of $H^1_{orb}(\PB, \Q)$ which is supported at the orbifold point $x_j$ and satisfies
$$\D{j} \cup \D{j} = \frac{1}{2} \pt.$$
In addition, $\mathbf{1} \in H^*_{orb} (\PB, \Q)$ serves as the unit for the cup product, and $\D{i} \cup \D{j} =\D{i} \cup \pt =0$ for $i \neq j$, which completes the multiplication table among generators.

\subsection{Orbifold Gromov-Witten invariants}
We briefly recall orbifold Gromov-Witten theory from \cite{CR2}, mainly focusing on our example $\PB$. We will also review the result of Satake-Takahashi \cite{ST} on the Gromov-Witten invariant of $\PB$ at the end of the section.

Let $(X,\omega)$ be an compact effective symplectic orbifold with a compatible almost complex structure $J$. The orbifold Gromov-Witten invariants count the number of ``marked orbifold stable maps", which are holomorphic maps from an orbifold Riemann surface $(\Sigma,j)$ (with a complex structure $j$) into $(X,J)$ satisfying certain intersection condition at each marking. For such a map $f : \Sigma \to X$, we require that the orbifold singularities at $x \in \Sigma$ and at $f(x) \in X$ are compatible in the following sense: consider the local lifting $\WT{f}_{x}: (V_x, G_x) \to (V_{f(x)}, G_{f(x)})$ of $f$. Then the homomorphism
\begin{equation}\label{eqn:injpi1}
\pi_1^{orb} ([V_x/G_x]) =G_x \hookrightarrow \pi_1^{orb} ([V_{f(x)} / G_{f(x)}])=G_{f(x)}
\end{equation}
is injective. (See \cite[Definition 2.3.3]{CR2} for more details.)

Two orbifold stable maps are equivalent to each other if one is the reparameterization of the other by a biholomorphism between domains which respect all relevant data such as marked points.
For a given a homology class $\beta \in H_2(|{X}|; \Z)$, we denote by $\OL{\CM}_{g,k,\beta}({X}, J)$ the (compactified) moduli space of equivalence classes of orbifold stable maps of genus $g$, with $k$ marked points, and of homology class $\beta$.

 We fix a $\Q$-basis $\{\gamma_i \}_{i=1,\cdots,N}$ of $H^\ast_{orb} ({X}, \Q)$.
 Then the $k$-fold Gromov-Witten invariants is defined by the following equation:
\begin{equation*}\label{eq:nGW}
\langle \gamma_1 , \cdots, \gamma_k \rangle^{{X}}_{g, k, \beta} := \int_{[\OL{\CM}_{0,k,\beta}({X})]^{vir}} ev_1^* \gamma_1 \wedge \cdots \wedge ev_k^* \gamma_k.
\end{equation*}
where $ev_i$ is the evaluation of the stable map at $i$-th marked point.
We also define $\langle \gamma_1 , \cdots, \gamma_k \rangle^{{X}}_{g, k}$ to be the weighted sum $\sum_{\beta} \langle \gamma_1 , \cdots, \gamma_k \rangle^{{X}}_{g, k, \beta} q^{\omega (\beta)}$.

\begin{remark}
$\OL{\CM}_{0,k,\beta}({X})$ admits a virtual fundamental class $[\OL{\CM}_{0,k,\beta}({X})]^{vir}$ by various constructions, which will not be needed in our case of  $\PB$ since the moduli space is already smooth.
See Appendix \ref{app:trans}.
\end{remark}

If we set $\bt := \sum t_i \gamma_i$, then the generating function for the Gromov-Witten invariants is defined as
\begin{equation*}
F^{{X}}_0 (\bt):= \sum_{k, \beta} \frac{1}{k!} \langle \bt, \cdots, \bt \rangle^{{X}}_{0, k, \beta} q^{\omega(\beta)},
\end{equation*}
which we will call the genus-0 Gromov-Witten potential for ${X}$.

Now let us examine the case of $\PB$ more concretely. In this case, we have four variables $t_i$ associated with cycles in twisted sectors $\D{i}$ for $i=1,2,3,4$ (see \eqref{eqn:PBHbasis}) together with $t_0$ corresponding to the unit class $\mathbf{1}$.
Instead of using variable $t_5$ for the point class, we use $q$ for $q=\exp(t_5)$. This is due to divisor axiom for the Gromov-Witten invariant.
Thus the genus zero Gromov-Witten potential $F$ of $\PB$ is a power series in $t_i$ ($0 \leq i \leq 4$) whose coefficients are also power series in $q$, the (exponentiated) symplectic area of $\PB$.
Satake-Takahashi \cite{ST} computed $F(q,t_i)$ completely by WDVV method, which is given as follows:
\begin{equation}\label{eqn:STcomPB}
\begin{array}{ll}
F = & \frac{1}{2} t_0^2 \log q + \frac{1}{4} t_0 ( t_1^2 + t_2^2 + t_3^2 + t_4^2 ) \\ &+ ( t_1 t_2 t_3 t_4 ) \cdot f_0 (q) + \frac{1}{4} ( t_1^4 + t_2^4 + t_3^4 + t_4^4 ) \cdot f_1 (q) \\ &+ \frac{1}{6} ( t_1^2 t_2^2 + t_1^2 t_3^2 + t_1^2 t_4^2 + t_2^2 t_3^2 + t_2^2 t_4^2 + t_3^2 t_4^2 ) \cdot f_2 (q)
\end{array}
\end{equation}
where
\begin{equation*}\label{eqn:f0f1f2f}
\begin{array}{ll}
f_0 (q) &:= \frac{1}{2} (f(q) - f(-q))\\
f_1 (q) &:= f(q^4)\\
f_2 (q) &:= f(q) - f_0(q) - f_1 (q)\\
f (q) &:= -\frac{1}{24} + \sum_{n=1}^{\infty} n \frac{q^n}{1 - q^n}
\end{array}
\end{equation*}
This article aims to reinterpret the coefficients $f_i(q)$ (and $f(q)$) in terms of holomorphic sphere countings in $\PB$. Note that nonconstant terms in $f(q)$ can be written as
\begin{equation}\label{eqn:nonconstf}
\sum_{n=1}^{\infty} n \frac{q^n}{1 - q^n} = \sum_{n=1}^{\infty} n\, q^n \left( \sum_{k=0}^{\infty} q^{kn} \right) = \sum_{n,k \geq 1}^{\infty} n q^{nk}= \sum_{d=1}^{\infty} \mathfrak{D}(d) q^d
\end{equation}
for $\mathfrak{D}(d)$ a divisor sum function, which will turn out to be closely related to the combinatorial nature of our sphere countings later. (In literatures, $\sigma_1(d)$ is a more common notation for the divisor sum function.) First few terms of $f$ are given as
\begin{align*}
f (q) &= -\frac{1}{24} + q + 3 q^2 + 4 q^3 + 7 q^4 + 6 q^5 + 12 q^6 + 8 q^7 + 15 q^8 + 13 q^{9} \\
&+ 18 q^{10} + 12 q^{11} + 28 q^{12} + 14 q^{13} + 24 q^{14} + 24 q^{15} + 31 q^{16} + 18 q^{17} + \cdots.
\end{align*}

\section{Holomorphic orbi-spheres countings in $\PB$}\label{sec:mainsec_sphcount}

In this section, we prove that the counting problem of degree $d$ holomorphic orbi-spheres in $\PB$ (with four orbi-marked points) is equivalent to finding integer solutions $(\alpha, \beta, \gamma,\delta)$ of the equation
\begin{equation}\label{eqn:mainadbcN1}
\alpha \delta - \beta \gamma = d
\end{equation}
up to $SL (2, \Z)$-action on $\left\{ \left(\begin{array}{cc}\alpha & \gamma \\\beta & \delta\end{array}\right) : \alpha, \beta,\gamma,\delta \in \Z  \right\}$, which we will call the {\it determinant equation}. For this, we will turn the counting problem into the classification of sublattices of $\Lambda_0$ (the standard $\Z^2$-lattice in $\C$).
In the terminology of Gromov-Witten theory, the number of solutions of \eqref{eqn:mainadbcN1} modulo equivalence provides the computation of the following type invariants:
\begin{equation}\label{eqn:mainGW}
 \langle \D{i}, \D{j}, \D{k}, \D{l} \rangle_{0,4,d}\quad 1 \leq i,j,k,l \leq 4 ,\,\, d \geq 1.
\end{equation}
Here $d$ indicates the degree of contributing holomorphic orbi-spheres (regarded as a continuous map from a sphere to itself).
It will turn out in the counting procedure below that only nontrivial are  $\langle \D{1}, \D{2}, \D{3}, \D{4} \rangle_{0,4,d}$, $\langle \D{j}, \D{j}, \D{j}, \D{j} \rangle_{0,4,d}$, $\langle \D{j}, \D{j}, \D{k}, \D{k} \rangle_{0,4,d}$
for $1 \leq j < k \leq 4$ up to reordering of $\D{\bullet}$'s.
The complete computation the genus-0 Gromov-Witten potential will be given in the Section \ref{Sec:computation}.

We first clarify one delicate issue of the counting problem we will consider, that is, complex structures of the domain orbi-curve can vary.

\subsection{Complex structures on the domain orbi-curve}\label{subsec:cpxstrdomain}

%----------------------------------------------------------

As we are considering holomorphic orbi-spheres for \eqref{eqn:mainGW}, the domain of such maps $u$ are topologically $\PB$ itself. Hence, the map $u$ we are interested in is a holomorphic map from $\PB$ to itself, sending orbi-points to orbi-points. Locally around an orbi-point, $u$ is given by the following form 
\begin{equation}\label{eqn:locorbmor}
 \C/\Z_2 \stackrel{[id]}{\longrightarrow} \C/ \Z_2
\end{equation}
where $[id]$ is induced by $\C \stackrel{id}{\to} \C$.

Important point is that complex structure of the domain can be arbitrary while the complex structure of the target of $u$ is fixed by $J_{std}$. Therefore, we should look into the holomorphic maps $u$
$$ u : (\PB, j) \to (\PB, J_{std})$$
for all possible complex structures $j$ on $\PB$.

\begin{remark}
In general, there could be degenerate maps (nodal curves) in $\WT{\mathcal{M}}_{0,4,d} (\D{i},\D{j},\D{k},\D{l})$ but we shall rule out such possibility in \ref{subsec:nodalcontri} by a simple topological argument.
\end{remark}

%----------------------------------------------------------
We now investigate possible complex structures that the domain $\PB$ can admit. Recall that $\PB$ is a $\Z_2$-quotient of the 2-dimensional torus $T^2$ where we identify $T^2$ with $\C / \Lambda$ and the $\Z_2$-action is generated by $z \mapsto -z$ (see Remark \ref{rmk:jonpb}).
Any complex structure $j$ on $\PB$ induces a $\Z_2$-invariant complex structure on a torus $T^2$ which is a pull-back of $j$ by the quotient map $T^2 \to \PB$.
Conversely, any complex structure on $T^2$ is $\Z_2$-invariant in the following sense:
\begin{itemize}
\item[(i)]
the quotient space $T^2= \C / \Lambda$ can be naturally equipped with the complex structure inherited from $\C$ (we denote the resulting elliptic curve by $E_\Lambda$), and
\item[(ii)]
two such elliptic curves
$E_\Lambda=\C / \Lambda$ and $E_{\Lambda'}=\C / \Lambda'$ are biholomorphic if and only if $\Lambda = \alpha \Lambda'$ for some $\alpha \in \C^{\times}$.
\item[(iii)]
Since any $\Z$-lattice $\Lambda$ is invariant under the action $z \mapsto -z$ on $\C$, so is the complex structure on $\C / \Lambda$.
\end{itemize}

Consequently, we can identify the set of complex structures on $\PB$ with the set of those on $T^2$.
From now on, we denote by $j_{\Lambda}$ the complex structure on $\PB$  inherited from $\C / \Lambda$.
Taking complex structure into account, we have $(\PB , j_{\Lambda}) = [E_{\Lambda}/\Z_2]$ as complex orbifolds.

\begin{definition}
We denote by $\PB(\Lambda)$ the complex orbifold $[E_{\Lambda}/\Z_2]$ equipped with the complex structure $j_{\Lambda}$. In particular, the target $\PB$ (of holomorphic orbi-spheres) with $J_{std}$ is $\PB(\Lambda_0)$ for the standard lattice $\Lambda_0 := \Z^2 \subset \C$.
\end{definition}
Note that $\PB(\Lambda)$ and $\PB(\alpha \Lambda)$ ($\alpha \in \C^\times$) are biholomorphic from the above discussion.

\bigskip
For $u : \PB(\Lambda) \to \PB(\Lambda_0)$, there are four orbi-marked points $z_1$, $z_2$, $z_3$, $z_4$ in the domain $\PB(\Lambda)$ which are the image of $\frac{1}{2} \Lambda$. For a basis $\{w_1, w_2\}$ of $\Lambda$, $\{0, v_1/2, v_2/2, v_1+v_2 /2\} \subset \C$ project down to four orbi-marked points of $\PB(\Lambda)$ (see Figure \ref{fig:sublat}). We denote these four orbifold points as
\begin{equation}\label{eqn:orbyi}
y_1 = 0,\quad y_2 = \frac{v_1}{2}, \quad y_3= \frac{v_1 + v_2}{2},\quad y_4=\frac{v_2}{2} \mod \Lambda.
\end{equation}
(Then the orbi-marked points $z_i$ in the domain is one of $y_j$'s.) Later, we will choose a specific basis to fix $y_j$'s. Note that $y_1=0$ does not depend on the choice of basis.

Recall from \eqref{eqn:arrangementxi} that the orbi-points $x_i$ in the target $\PB(\Lambda_0)$ admits a similar description.
\begin{figure}[h]
\begin{center}
\includegraphics[height=2.2in]{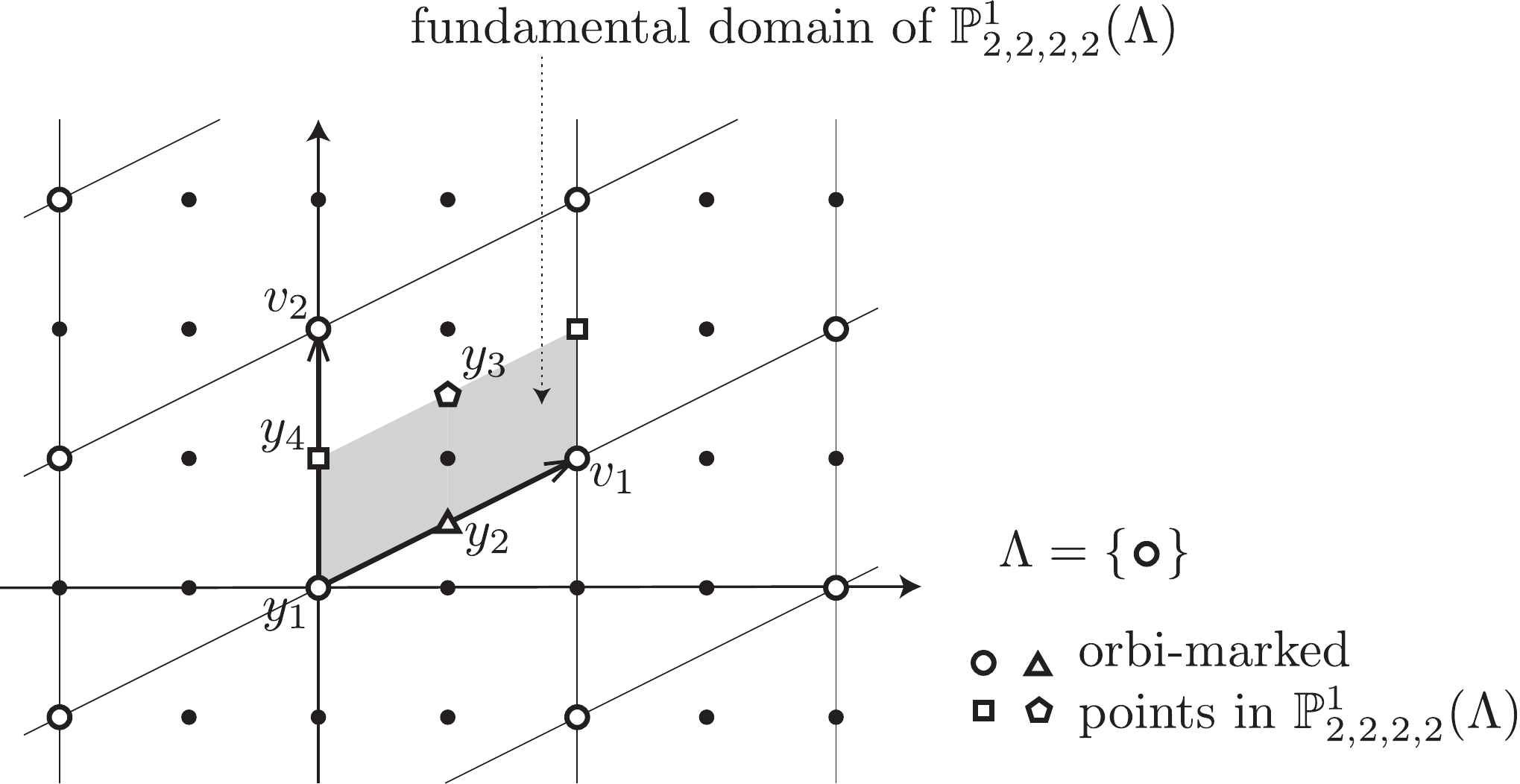}
\caption{Fundamental domain of $\PB (\Lambda)$ in $\C$ for $\Lambda=\langle (2,1),(0,1) \rangle$}\label{fig:sublat}
\end{center}
\end{figure}
We make the following assumption for simplicity.
\begin{assumption}\label{assm:firstmark}
Without loss of generality, we may assume the first marked point $z_1$ to be $y_1$ which is the image of $0 \in \C$ under the map $\C \to \PB(\Lambda)=[ E_\Lambda / \Z_2]$. (The same is true for $x_1$ by our arrangement before. See \eqref{eqn:arrangementxi}.) \end{assumption}

As mentioned in the beginning of the section, we will count the holomorphic orbi-spheres of the type $\langle \D{i}, \D{j}, \D{k}, \D{l} \rangle_{0,4,d}$.
We now provide a concrete description of the corresponding moduli spaces.

\begin{definition}\label{def:Mifjkl}
We define
\begin{equation*}
\begin{array}{l}
\WT{\mathcal{M}}_{0,4,d} (\D{i},\D{j},\D{k},\D{l}) :=\\
\left\{\left( \PB (\Lambda) \stackrel{u}{\to} \PB(\Lambda_0),\,z_i \right)  \left|
\begin{array}{l}
 \bullet \,\, u\,\,\mbox{is}\,\,(j_{\Lambda}, J_{std})\mbox{-holomorphic},\, \deg u =d\\
 \bullet \,\, \{z_1,z_2,z_3,z_4\} = \left[\frac{1}{2} \Lambda\right],\,\, z_1 = [0] \\
 \bullet \,\, u : (z_1,z_2,z_3,z_4) \mapsto (x_i,x_j,x_k,x_l)\\
 \bullet \,\, \mbox{local lifting of}\,\,u\,\,\mbox{at}\,\,z_i\,\,\mbox{is given as in \eqref{eqn:locorbmor}}
\end{array}
\right. \right\}.
\end{array}
\end{equation*}
Two maps $\left( \PB (\Lambda) \stackrel{u}{\to} \PB(\Lambda_0),\,z_i \right)$ and $\left(\PB (\Lambda') \stackrel{u'}{\to} \PB(\Lambda_0),\,z_i' \right)$ in $\WT{\mathcal{M}}_{0,4,d} (\D{i},\D{j},\D{k},\D{l})$ are said to be equivalent if there exits a biholomorphism $\phi : \PB (\Lambda) \to \PB (\Lambda')$ such that $u = u' \circ \phi$ and $\phi (z_i) = z_i'$.
Denote the resulting equivalence relation by $\sim$.

Finally, the moduli space $\mathcal{M}_{0,4,d} (\D{i},\D{j},\D{k},\D{l})$ is defined by the quotient of $\WT{\mathcal{M}}_{0,4,d} (\D{i},\D{j},\D{k},\D{l})$ via the equivalence relation $\sim$.

\end{definition}

The following proposition asserts that $\mathcal{M}_{0,4,d} (\D{i},\D{j},\D{k},\D{l})$ is the correct moduli space for the Gromov-Witten invariant we are interested in.

\begin{prop}\label{prop:regnodeg}
$\mathcal{M}_{0,4,d} (\D{i},\D{j},\D{k},\D{l})$ is smooth, compact and of dimension 0. Furthermore
$$\langle \D{i},\D{j},\D{k},\D{l} \rangle_{0,4,d} = | \mathcal{M}_{0,4,d} (\D{i},\D{j},\D{k},\D{l}) |,$$
where $| \,\cdot\, |$ means the cardinality of the set.
\end{prop}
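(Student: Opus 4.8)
The strategy is to reduce everything to the standard Gromov--Witten formalism plus the transversality statement from the appendix. First I would address dimension. By the orbifold index formula (Chen--Ruan), the virtual dimension of $\OL{\CM}_{0,4,\beta}(\PB)$ with the four insertions $\D{i},\D{j},\D{k},\D{l}$ in the degree-$1$ twisted sectors is
\begin{equation*}
\dim_{\R} = (\dim_{\C}\PB - 3)\cdot 2 + 2\langle c_1(T\PB),\beta\rangle + 2(3-k) - 2\sum \iota(\D{\bullet}),
\end{equation*}
which for the Calabi--Yau orbifold $\PB$ ($c_1=0$, $\dim_{\C}=1$, $k=4$) and age shifts $\iota(\D{\bullet})=1/2$ gives $0$. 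So the expected dimension is $0$. The content of Proposition~\ref{prop:regnodeg} beyond this index computation is that the actual moduli space $\mathcal{M}_{0,4,d}(\D{i},\D{j},\D{k},\D{l})$ — defined combinatorially as honest holomorphic orbi-spheres modulo biholomorphism — \emph{coincides} with the Gromov--Witten moduli space, is cut out transversally (hence smooth of dimension $0$), and is compact.

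The key steps, in order, are: (1) Identify $\WT{\mathcal{M}}_{0,4,d}(\D{i},\D{j},\D{k},\D{l})$ with the space of stable maps underlying the invariant. Since every contributing curve is a degree-$d\ge 1$ holomorphic orbi-sphere $u:\PB(\Lambda)\to\PB(\Lambda_0)$ with the prescribed twisted-sector conditions at the four orbi-marked points, and the local form at each orbi-point is forced to be \eqref{eqn:locorbmor} (an isomorphism on local groups, so the injectivity condition \eqref{eqn:injpi1} holds automatically), Definition~\ref{def:Mifjkl} already records precisely the data of such a stable map together with the labelling $z_1=[0]$. Two such maps are GW-equivalent exactly when they differ by a biholomorphism of domains respecting marked points, which is the relation $\sim$. (2) Rule out nodal/degenerate configurations: this is promised in \S\ref{subsec:nodalcontri} by a topological argument — a stable map of this homology class with a nodal domain would force some component to be a degree-$0$ ghost carrying $\ge 3$ special points or to violate the orbifold-point count, contradicting stability or the insertion pattern. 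Hence $\OL{\CM}=\CM$ and the moduli space is already compact (this also follows from Gromov compactness once degenerations are excluded). (3) Transversality/smoothness: invoke Appendix~\ref{app:trans}, where the linearized $\dbar$-operator is shown to be surjective at every such $u$, so the moduli space is a smooth $0$-manifold and the virtual class is the ordinary fundamental class, i.e. a signed count of points; since all these curves are holomorphic with respect to the integrable $J_{std}$ and the automorphisms are accounted for, the signs are all $+1$ and the count is the cardinality. (4) Conclude $\langle\D{i},\D{j},\D{k},\D{l}\rangle_{0,4,d}=|\mathcal{M}_{0,4,d}(\D{i},\D{j},\D{k},\D{l})|$ by evaluating $\int_{[\OL{\CM}]^{vir}}\prod ev_m^*\D{\bullet}$: because the $\D{\bullet}$ live in twisted sectors, the integrand is (up to the normalization fixed by $\D{j}\cup\D{j}=\tfrac12\pt$) the class that simply \emph{selects} the component of the moduli space mapping the marked points to $(x_i,x_j,x_k,x_l)$, which is the whole of $\mathcal{M}_{0,4,d}(\D{i},\D{j},\D{k},\D{l})$ by construction; the evaluation maps are constant along this $0$-dimensional space, so the integral is just the number of points.

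\textbf{Main obstacle.} The genuinely substantive input is step~(3), transversality of the $\dbar$-operator for these orbifold maps — establishing $H^1$ of the relevant deformation complex vanishes — but that is exactly what Appendix~\ref{app:trans} is for, so in the body of the paper this reduces to a citation. The remaining delicate point, and the one I would be most careful about in writing, is the \emph{bookkeeping of automorphisms and marked points} in identifying the combinatorial moduli space with the GW one: a priori $\OL{\CM}_{0,4,\beta}$ is defined with a choice of $q$-weighting and with all four marked points on an equal footing, whereas Definition~\ref{def:Mifjkl} singles out $z_1=[0]$ via Assumption~\ref{assm:firstmark}. One must check that this choice is harmless — that fixing $z_1=[0]$ uses up exactly the reparametrization freedom (translations of $E_\Lambda$) without over- or under-counting — and that no contributing orbi-sphere has extra automorphisms beyond those already quotiented by $\sim$. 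Both follow from the lattice description developed in the rest of \S\ref{sec:mainsec_sphcount}, but the logical order matters: Proposition~\ref{prop:regnodeg} should be read as the bridge between the abstract GW definition and the concrete count that the subsequent subsections actually carry out.
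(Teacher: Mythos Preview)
Your proposal is correct and follows essentially the same route as the paper: the paper's proof is a two-line citation to the nodal-curve exclusion in \S\ref{subsec:nodalcontri} (Lemma~\ref{Lem:degenerate}) and the regularity result in Appendix~\ref{app:trans}, and your steps (2)--(4) unpack exactly these two ingredients. One minor correction worth noting: the actual nodal-exclusion argument in \S\ref{subsec:nodalcontri} is not the stability/ghost-component count you sketch, but rather shows directly that there is no nonconstant holomorphic map from any of $\PP^1,\ \PP^1_2,\ \PP^1_{2,2},\ \PP^1_{2,2,2}$ into $\PB$ (by lifting to the elliptic curve cover and using $\pi_2(E_0)=0$, or via an orbifold-$\pi_1$ argument for $\PP^1_2$), so every irreducible component of a putative nodal curve would be constant, contradicting $d\geq 1$.
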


\begin{proof}
The proof follows from classification result in \ref{subsec:nodalcontri} and the regularity in Appendix \ref{app:trans}.
\end{proof}

\subsection{Linear liftings of holomorphic orbi-spheres in $\PB$ and sublattices of $\Lambda_0$ in $\C$}

We next relate holomorphic orbi-spheres $u : \PB(\Lambda) \to \PB(\Lambda_0)$ in $\WT{\mathcal{M}}_{0,4,d} (\D{i},\D{j},\D{k},\D{l})$ with sublattices of $\Lambda_0$ in $\C$.

To make the exposition simpler, we fix the intersection condition at the first marked point $z_1(=y_1$; Assumption \ref{assm:firstmark}) of $u$ to be $\D{1}$ (supported at $x_1$), which does not violate generality. So, $z_1$ should be mapped to $x_1$.

\begin{definition}
We write
\begin{equation}\label{eqn:tempM}
\begin{array}{l}
\WT{\mathcal{M}}_{0,4,d} (1) := \bigcup_{j,k,l} \WT{\mathcal{M}}_{0,4,d} (\D{1} , \D{j}, \D{k}, \D{l}),\\
\mathcal{M}_{0,4,d} (1) := \bigcup_{j,k,l} \mathcal{M}_{0,4,d} (\D{1} , \D{j}, \D{k}, \D{l})
\end{array},
\end{equation}
where $(1)$ in the left hand side stands for the first insertion being $\Delta_1$.
\end{definition}
We will soon see that several components in the right hand side of \eqref{eqn:tempM} are actually zero.

In what follows, we will study the covering theory associated with a map $u$ in $\WT{\mathcal{M}}_{0,4,d} (1) $. $z_1(=y_1)$ and $x_1$ will denote the base points in the domain and the target of $u$, respectively.

\begin{remark}\label{rmk:234s3}
We do not fix the locations of other markings $z_2, z_3, z_4$ so that the same map $u$ can be interpreted as several different elements in \eqref{eqn:tempM}  depending on the position of $z_2,z_3,z_4$. i.e. $(z_2,z_3,z_4) = (y_{\tau(2)},y_{\tau(3)},y_{\tau(4)})$ for some permutation $\tau$ on $\{2,3,4\}$ (see \eqref{eqn:orbyi}).
\end{remark}

Orbifold covering theory will be briefly reviewed in Appendix \ref{app:orbcovth}.
For readers who do not want to go into details, we remark that we will only need the lifting theorem in orbifold covering theory which works similarly as in ordinary case, and the uniqueness of the universal covering space.

\begin{figure}[h]
\begin{center}
\includegraphics[height=2in]{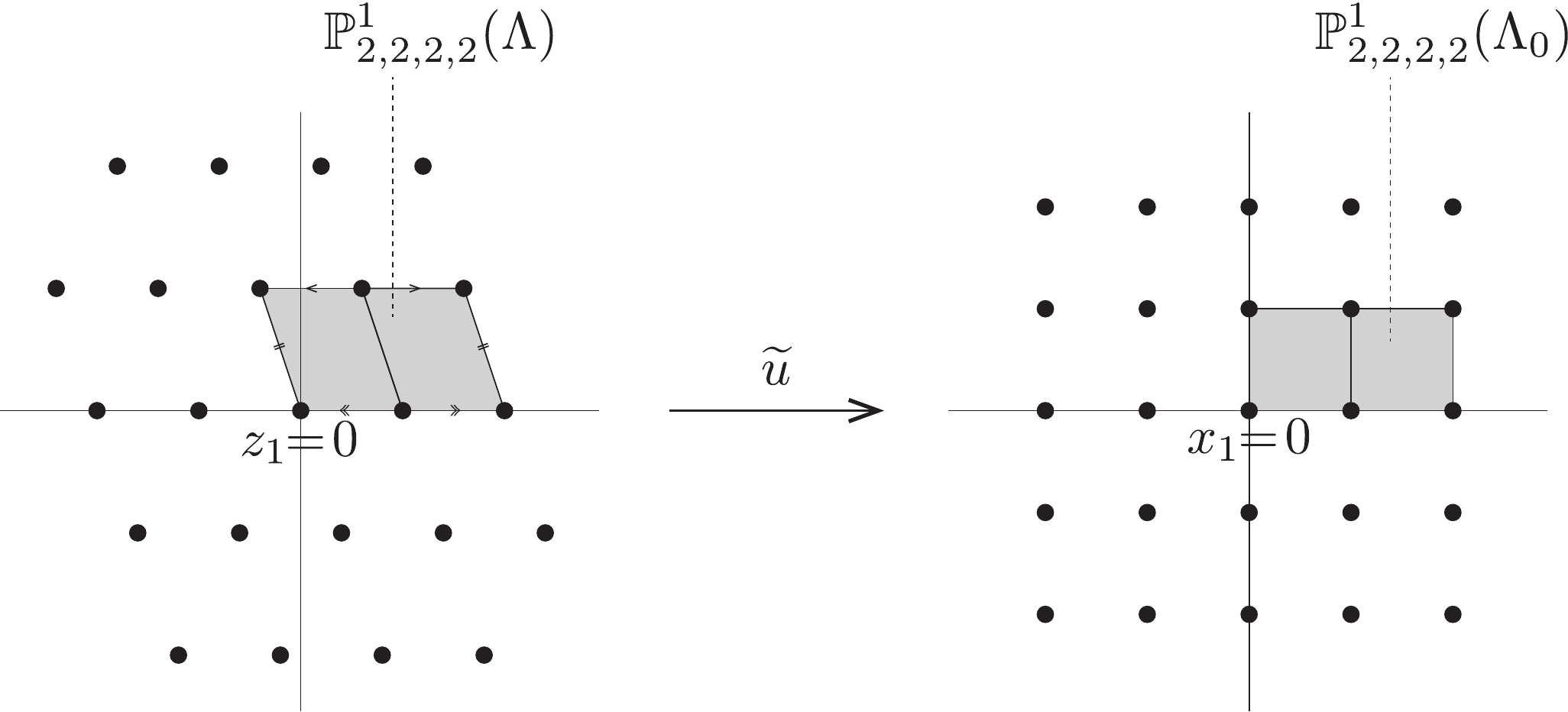}
\caption{Universal covers of $\PB (\Lambda)$ and $\PB (\Lambda_0)$}\label{fig:univcoverz1x1}
\end{center}
\end{figure}

Given an element $u : (\PB(\Lambda), z_1) \to (\PB(\Lambda_0),x_1)$ of $\WT{\mathcal{M}}_{0,4,d}(1)$, we first take universal covering spaces of the domain and the target of $u$:
\begin{equation}\label{eqn:twounivcov2}
\pi: (\C,0) \to (\PB(\Lambda),z_1 ),\quad p:(\C,0) \to (\PB(\Lambda_0),x_1 )
\end{equation}
where $0$ is the base point of both universal covers (see Figure \ref{fig:univcoverz1x1}). Here, we fix the covering map to be the composition of the quotient maps
$$\pi : \C \longrightarrow E_\Lambda (= \C / \Lambda) \longrightarrow \PB(\Lambda) (= [E_\Lambda / \Z_2]),$$
as in the construction of $\PB(\Lambda)$, and similar for $p$.
Note that two universal covering spaces in \eqref{eqn:twounivcov2} carry natural lattice structures, which are $\Lambda$ and $\Lambda_0$ respectively.

We claim that the lifting of $u$ on the level of universal cover gives a ($\C$-)linear map sending $\Lambda$ to a sublattice of $\Lambda_0$. We will need the following lemma which is crucial to apply the uniqueness of the universal cover below.
\begin{lemma}\label{lem:RHformula}
$u : (\PB (\Lambda),z_1) \to (\PB (\Lambda_0),x_1)$ in $\WT{\mathcal{M}}_{0,4,d}(1)$ is an orbifold covering map.
\end{lemma}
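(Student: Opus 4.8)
The plan is to verify the two defining conditions of an orbifold covering map for $u$: (1) $u$ is proper and surjective between the underlying spaces, and (2) around each point $x$ of the target the map looks like the model $\C/G' \to \C/G$ where $G \subset G'$ is an inclusion of finite groups, with $u$ induced by a linear isomorphism upstairs. Since both domain and target have underlying space $S^2$ and $u$ is a nonconstant holomorphic (hence open) map between compact surfaces, properness and surjectivity are immediate; the real content is the local normal form, especially at the four orbifold points.

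First I would treat the points of $\PB(\Lambda_0)$ that are \emph{not} orbifold points. Over such a point the target is an honest manifold chart, and $u$ restricted to the preimage is a holomorphic map between Riemann surfaces; the only subtlety is that a preimage point $y$ in $\PB(\Lambda)$ could a priori be an orbifold point of the domain. But the compatibility condition \eqref{eqn:injpi1} for an orbifold \emph{morphism} requires the local group $G_y$ to inject into $G_{u(y)}$; if $u(y)$ is smooth then $G_{u(y)}$ is trivial, forcing $G_y$ trivial, so $y$ is smooth as well. Hence over the smooth locus $u$ is literally a covering map of surfaces away from branch points, and the only thing to rule out is genuine ramification. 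This is where I would invoke the structure of $u$ near orbifold points established just above in \eqref{eqn:locorbmor}: at each $z_i$ the local lift is (a translate of) the identity $\C \to \C$, so $u$ is unramified at the orbifold points; combined with a Riemann–Hurwitz count — both $\PB(\Lambda)$ and $\PB(\Lambda_0)$ have orbifold Euler characteristic $0$, so $\chi^{orb}(\PB(\Lambda)) = \deg(u)\cdot\chi^{orb}(\PB(\Lambda_0))$ forces total ramification $0$ — there can be no branch points anywhere. (Alternatively, one lifts $u$ to $\WT u : \C \to \C$ as below and observes directly that $\WT u$, being holomorphic and equivariant with respect to lattice translations, must be affine, hence unramified.)

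Next, for the four orbifold points $x_j$ of the target: I would fix an orbifold chart $(\C, \Z_2)$ at $x_j$ and note that by the condition in Definition \ref{def:Mifjkl} (the local lifting of $u$ being as in \eqref{eqn:locorbmor}), a preimage orbifold point $y$ of $x_j$ carries local group exactly $\Z_2$ and the lifted map is the identity $\C\to\C$; a preimage point with trivial local group would violate injectivity in \eqref{eqn:injpi1} in the wrong direction only if — here I should be careful — actually trivial $G_y \hookrightarrow \Z_2$ is allowed, so such points can occur, and there $u$ is locally $\C \to \C/\Z_2$, $z \mapsto [z]$ after a suitable chart, which is still a legitimate orbifold covering chart (the $2:1$ branched-type model that is unbranched in the orbifold sense). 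In all cases the local picture is the quotient of a linear isomorphism of $\C$, which is precisely the defining local model for an orbifold covering. Assembling the local models gives that $u$ is an orbifold covering map.

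The main obstacle I anticipate is the bookkeeping at the orbifold points: one must be careful that ``orbifold covering'' in the sense used here (following Thurston, cf. Appendix \ref{app:orbcovth}) allows the local model $\C/\Z_2 \to \C/\Z_2$ induced by \emph{any} linear isomorphism, and that the injectivity hypothesis \eqref{eqn:injpi1} together with the prescribed local form \eqref{eqn:locorbmor} really does pin down the behavior at \emph{every} point of the preimage of each $x_j$, not just at the four marked points $z_i$. The cleanest route around this is probably to pass to the lift $\WT u : \C \to \C$: once one knows $\WT u$ exists (using the lifting theorem for orbifold covers, valid because $\C$ is the universal cover and simply connected) and is a holomorphic map commuting appropriately with the deck transformations, it must be $\C$-linear (it sends the lattice action to the lattice action, so its derivative is constant), and then the orbifold-covering property of $u$ is a formal consequence of $\WT u$ being a linear isomorphism of $\C$ carrying $\Lambda$ into $\Lambda_0$ and intertwining the two $\Z_2$-actions. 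I would present the argument in that order — lift, show linear, deduce covering — since it both proves the lemma and sets up the sublattice correspondence that is the goal of the subsection.
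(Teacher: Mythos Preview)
Your proposal is essentially correct but follows a genuinely different route from the paper. The paper's proof (in Appendix~\ref{app:orbcovth}) forgets the orbifold structure and applies the \emph{ordinary} Riemann--Hurwitz formula to $u$ viewed as a branched cover $\PP^1\to\PP^1$. The parity constraints forced by $\Z_2$-equivariance make the local degree at each marked point $z_i$ equal to $2a_i+1$ (odd) and at every other preimage of an orbifold point equal to $2e^i_j$ (even); plugging these into Riemann--Hurwitz together with the fiberwise degree count yields the chain of inequalities that collapses to $a_i=0$ and $e^i_j=1$, i.e.\ $u$ is an orbifold covering. Your orbifold Euler-characteristic argument ($\chi^{orb}=0$ on both sides, so the ramification term vanishes) is the same computation repackaged more conceptually; it is valid, though you should state the orbifold Riemann--Hurwitz formula \emph{with} a ramification term rather than just the multiplicativity for covers, since multiplicativity is precisely what you are trying to establish.

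Your preferred ``lift first, then show linear'' approach also works, but be aware that it \emph{reverses} the paper's logic: in the paper, Lemma~\ref{lem:RHformula} is proved first and is then the key input in Proposition~\ref{prop:lifting} (it is what lets one conclude that $\WT u$ is a biholomorphism via uniqueness of the universal cover). If you go your route, you are effectively proving Proposition~\ref{prop:lifting} directly and obtaining Lemma~\ref{lem:RHformula} as a corollary. That is fine, but the linearity step needs more than ``equivariant with respect to lattice translations'': a priori the induced homomorphism on deck groups could send a translation to an involution $z\mapsto -z+\mu$. What you actually get is $\WT u(z+\lambda)=\pm\WT u(z)+\mu(\lambda)$, hence $\WT u'$ is $2\Lambda$-periodic, and then Liouville gives $\WT u'$ constant. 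You should make this explicit, and avoid any appeal to Proposition~\ref{prop:lifting} itself, to keep the argument non-circular.
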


The proof of the lemma will be given in Appendix \ref{app:orbcovth}.

\begin{prop}\label{prop:lifting}
Any element $u : (\PB (\Lambda),z_1) \to (\PB (\Lambda_0),x_1)$ in $\WT{\mathcal{M}}_{0,4,d}(1)$
can be lifted to a linear map $\WT{u} : \C \to \C$ sending $\Lambda$ into $\Lambda_0$. In particular, $u$ uniquely determines a sublattice $\Lambda_u:=\WT{u} (\Lambda)$ of $\Lambda_0$.
\end{prop}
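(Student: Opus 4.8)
The plan is to use the lifting theorem for orbifold covering spaces (reviewed in Appendix \ref{app:orbcovth}), applied to the composition $u \circ \pi : (\C, 0) \to (\PB(\Lambda_0), x_1)$. First I would observe that by Lemma \ref{lem:RHformula}, $u$ is an orbifold covering map, so $u_* : \pi_1^{orb}(\PB(\Lambda), z_1) \to \pi_1^{orb}(\PB(\Lambda_0), x_1)$ is injective and identifies $\pi_1^{orb}(\PB(\Lambda), z_1)$ with a finite-index subgroup. Since $\pi : \C \to \PB(\Lambda)$ is the orbifold universal cover, its composite with $u$ is an orbifold covering $\C \to \PB(\Lambda_0)$, and because $\C$ is simply connected the uniqueness of the orbifold universal cover forces $u \circ \pi$ to agree with $p : \C \to \PB(\Lambda_0)$ up to a deck transformation of $p$ — i.e., there is a homeomorphism $\WT{u} : \C \to \C$ (a lift of $u$ through the base-point-preserving covers) with $p \circ \WT{u} = u \circ \pi$ and $\WT{u}(0) = 0$.

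Next I would upgrade $\WT{u}$ from a homeomorphism to a holomorphic, in fact affine-linear, map. Holomorphicity is local: away from the lattice points $\Lambda$ and $\frac12\Lambda$ (and their images), $\pi$ and $p$ are local biholomorphisms and $u$ is holomorphic by hypothesis, so $\WT{u}$ is holomorphic there; at the remaining discrete set of points, $\WT{u}$ is continuous and holomorphic off a discrete set, hence holomorphic everywhere by Riemann's removable singularity theorem. Now $\WT{u} : \C \to \C$ is an entire covering map onto $\C$ (as a composite of covering maps), so it is injective; an injective entire map is affine, $\WT{u}(z) = az + b$, and $\WT{u}(0) = 0$ gives $b = 0$. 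Thus $\WT{u}(z) = az$ is $\C$-linear.

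It remains to see that $\WT{u}$ carries $\Lambda$ into $\Lambda_0$. Since $\WT{u}$ descends to $u$ through the quotient maps $\C \to E_\Lambda \to \PB(\Lambda)$ and $\C \to E_{\Lambda_0} \to \PB(\Lambda_0)$, and since $u$ is a well-defined map of orbifolds, $\WT{u}$ must intertwine the deck groups: the deck group of $\pi$ is generated by translations by $\Lambda$ together with $z \mapsto -z$, and likewise for $p$. Equivariance under the $\Z_2$-factors is automatic for a linear map. Equivariance under translations says that for each $w \in \Lambda$ the map $z \mapsto \WT{u}(z+w) - \WT{u}(z) = aw$ is a deck transformation of $p$ fixing the property of being a translation, hence $aw \in \Lambda_0$. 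Therefore $\WT{u}(\Lambda) = a\Lambda \subseteq \Lambda_0$, and $\Lambda_u := \WT{u}(\Lambda)$ is a sublattice of $\Lambda_0$; it is determined by $u$ because $\WT{u}$ is the unique base-point-preserving lift. I expect the main obstacle to be the careful bookkeeping of orbifold fundamental groups and deck transformations in the lifting/uniqueness step — in particular making precise that the orbifold universal cover of $\PB(\Lambda_0)$ is genuinely $\C$ with the stated deck group, and that the local-lifting condition \eqref{eqn:locorbmor} at $z_1$ is exactly what is needed for the orbifold lifting criterion to apply; everything after that (removable singularities, injective entire maps are linear, equivariance) is routine.
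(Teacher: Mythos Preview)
Your proof is correct and follows essentially the same strategy as the paper's: lift $u$ through the orbifold universal covers, use uniqueness of the universal cover to see that the lift $\WT{u}$ is a biholomorphism of $\C$ fixing $0$, hence linear. The paper is slightly more economical---it takes holomorphicity of $\WT{u}$ directly from the lifting theorem and deduces $\WT{u}(\Lambda)\subset\Lambda_0$ simply from $\WT{u}(\pi^{-1}(z_1))\subset p^{-1}(x_1)=\Lambda_0$---but your removable-singularity and deck-equivariance arguments are valid alternatives.
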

\begin{proof}
Applying the lifting theorem to the orbifold covering map $u \circ \pi$, there is a unique holomorphic map $\WT{u}$ such that the following diagram commutes:
\begin{equation*}\label{diag:lifting1}
\xymatrix{
(\C,0) \ar[rr]^{\WT{u}} \ar[d]^{\pi}	&&	(\C,0) \ar[d]^{p}	\\
(\PB(\Lambda),z_1)	\ar[rr]^{u}	&&	(\PB(\Lambda_0), x_1)
}.
\end{equation*}
Since $u$ is a covering map by Lemma \ref{lem:RHformula} and the composition of two covering maps is again a covering map,  $u \circ \pi $ gives the (orbifold) universal cover of $\PB(\Lambda_0)$.
By the uniqueness of the universal cover, two universal covers $u \circ \pi $ and $p$ should be isomorphic, which implies that $\WT{u}$ is an biholomorphism. By elementary complex analysis, such a biholomorphism reduces to a degree 1 polynomial, but the constant term of $\WT{u}$ must vanish in order to preserve the base point $0$ of $\C$. Therefore,
$\WT{u} (z) = \xi z$ for some $\xi \in \C^{\times}$.
Observe that $\pi^{-1} (z_1) = \Lambda$ and $p^{-1} (x_1) = \Lambda_0$, Since $\WT{u}$ is a lifting of $u$, we see that $\Lambda_u:= \xi \cdot \Lambda$ is a sublattice of $\Lambda_0$.
\end{proof}

\begin{definition}\label{def:indindlat}
\begin{enumerate}
\item
For $u$ in $\WT{\mathcal{M}}_{0,4,d} (1)$, we call the linear map $\WT{u}$ on $\C$ in Proposition \ref{prop:lifting}  {\em the  linear lifting of $u$}.
\item
For a 2-dimensional sublattice $\Lambda$ of $\Lambda_0$, $\Lambda_0 / \Lambda$ is a finite abelian group. We define $\ind (\Lambda)$ (the index of $\Lambda$) by the number of elements in $\Lambda_0 / \Lambda$.
\item
We denote the set of 2-dimensional sublattices of $\Lambda_0$ with index $d$ by $\mathcal{L}_d (\Lambda_0)$.
\end{enumerate}
\end{definition}
The index of a sublattice $\Lambda$ of $\Lambda_0$ can be alternatively described as follows. Let $\{(\alpha,\beta), (\gamma,\delta)\}$ be a $\Z$-basis of $\Lambda$ such that $(\alpha,\beta)=\alpha+\beta \sqrt{-1}$ and $(\gamma,\delta) = \gamma+ \delta \sqrt{-1}$ are positively oriented. Then it is an easy exercise to check that
\begin{equation}\label{eqn:degdet}
\ind(\Lambda) = \left(\begin{array}{cc} \alpha & \gamma \\ \beta & \delta \end{array}\right) =\alpha \delta - \beta \gamma.
\end{equation}

If a sublattice is obtained from the linear lifting of an element $u$ of $\WT{\mathcal{M}}_{0,4,d} (1)$, its index has to agree with the degree of $u$.

\begin{lemma}\label{lem:degindcomp}
The sublattice $\Lambda_u$ for a degree $d$ holomorphic orbi-sphere $u: \PB(\Lambda) \to \PB(\Lambda_0)$ has index $d$.
\end{lemma}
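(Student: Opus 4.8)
The plan is to compare two different ways of computing the degree of $u$ and matching them with the index of $\Lambda_u$. First I would recall that by Proposition~\ref{prop:lifting} the linear lifting is $\WT{u}(z) = \xi z$ for some $\xi \in \C^\times$, and $\Lambda_u = \xi \Lambda$ sits inside $\Lambda_0$. The degree $d$ of $u$ is by definition the topological degree of $u$ regarded as a continuous self-map of the underlying sphere $|\PB| = S^2$; I would instead compute it as the degree of $u$ as an orbifold covering map, i.e. the number of preimages of a generic (non-orbi) point. Since $u$ is an orbifold covering by Lemma~\ref{lem:RHformula}, this covering degree equals the topological degree on the coarse spaces (the branch/orbi-locus has measure zero and does not affect the count for a generic target point).

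Next I would compute this covering degree using the tower of covers. We have the commuting square with $\pi : \C \to \PB(\Lambda)$, $p : \C \to \PB(\Lambda_0)$ the orbifold universal covers, and $\WT{u}(z) = \xi z$ the lifting. Both $\pi$ and $p$ are orbifold covers of ``degree $|\Z_2|$ times lattice covolume'' in an appropriate sense; more concretely, pick a generic point $x \in \PB(\Lambda_0)$ and note that $p^{-1}(x)$ is a coset $a + \Lambda_0 \subset \C$ (for a generic representative $a$, the $\Z_2$ does not identify points within a small neighborhood, so $p$ is genuinely $|\Lambda_0|$-to-one onto the coarse space near $x$, locally). Then $(u\circ\pi)^{-1}(x) = \WT{u}^{-1}(a+\Lambda_0) = \xi^{-1}(a+\Lambda_0)$, while $\pi^{-1}(u^{-1}(x))$ consists of $\deg(u)$ many cosets of $\Lambda$. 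Counting how many cosets of $\Lambda$ fit inside the single coset-lattice $\xi^{-1}\Lambda_0 = $ (set of $\WT{u}$-preimages of $p^{-1}(x)$), which is exactly the index $[\xi^{-1}\Lambda_0 : \Lambda] = [\Lambda_0 : \xi\Lambda] = [\Lambda_0 : \Lambda_u] = \ind(\Lambda_u)$, gives $\deg u = \ind(\Lambda_u) = d$.

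Alternatively — and this may be the cleanest route to write up — I would argue directly on the elliptic curves. The linear map $\WT{u}(z) = \xi z$ with $\xi\Lambda \subset \Lambda_0$ descends to a holomorphic group homomorphism (an isogeny) $\bar u : E_\Lambda = \C/\Lambda \to E_{\Lambda_0} = \C/\Lambda_0$ whose kernel is $\xi^{-1}\Lambda_0 / \Lambda$, a group of order $[\Lambda_0 : \xi\Lambda] = \ind(\Lambda_u)$; hence $\deg \bar u = \ind(\Lambda_u)$. This isogeny intertwines the two $\Z_2$-actions ($z\mapsto -z$ on both sides), so it descends to $u : \PB(\Lambda) \to \PB(\Lambda_0)$, and the degree of $u$ on coarse spaces equals the degree of $\bar u$ on coarse spaces (the quotient $E \to |\PB|$ is generically $2$-to-$1$ on both sides, and these factors cancel). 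Combining with \eqref{eqn:degdet}, which identifies $\ind(\Lambda_u) = \alpha\delta - \beta\gamma$ for a positively oriented basis, we conclude $d = \ind(\Lambda_u)$.

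\textbf{Main obstacle.} The only genuinely delicate point is the bookkeeping of the $\Z_2$-quotient when passing from degrees of maps between elliptic curves (or their universal covers) to degrees of the induced orbi-sphere maps: one must check that the ``factor of $2$'' from the quotient $E \to |\PB|$ appears symmetrically on domain and target and therefore cancels, so that no spurious factor of $2$ (or $1/2$) creeps into the relation $\deg u = \deg \bar u$. I expect this is handled cleanly by evaluating both degrees at a point that is \emph{not} a $\Z_2$-fixed point and \emph{not} an orbifold point, where both quotient maps are honest local homeomorphisms; everything else is routine. A secondary point to state carefully is that $\deg u$ — defined via the continuous self-map of $S^2$ — indeed agrees with the orbifold covering degree; this follows since $u$ is an orientation-preserving branched/orbifold cover and away from a finite set it is an honest covering of the stated degree.
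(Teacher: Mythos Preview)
Your proposal is correct, and your second route (via the isogeny $\bar u : E_\Lambda \to E_{\Lambda_0}$ with kernel $\xi^{-1}\Lambda_0/\Lambda$) is exactly the paper's argument: the paper writes the single line $\deg u = \deg \bar u = |\WT{u}^{-1}(\Lambda_0)/\Lambda| = |\Lambda_0/\WT{u}(\Lambda)| = \ind(\Lambda_u)$. You have in fact been more careful than the paper about the one point it glosses over, namely why $\deg u = \deg \bar u$ after the $\Z_2$-quotient; your observation that the generic $2$-to-$1$ factors on domain and target cancel is precisely what justifies that equality.
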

\begin{proof}
The linear lifting $\WT{u}$ induces a map $\bar{u}:E_{\Lambda} \to E_{\Lambda_0}$ whose kernel is $\WT{u}^{-1} (\Lambda_0) / \Lambda$. Therefore
$$ \deg u = \deg \bar{u} = | \WT{u}^{-1} (\Lambda_0) / \Lambda | = | \Lambda_0 / \WT{u} (\Lambda)| = \ind (\Lambda_u).$$
\end{proof}

We remark that sublattices $\Lambda_u$ and $\Lambda_{u'}$ for two different elements $u$ and $u'$ in $\mathcal{M}$ can coincide. Our next task is to resolve this ambiguity.

\subsection{The moduli of holomorphic orbi-spheres and the set of sublattices}\label{subsec:holoorbsublat}

Consider two non-constant holomorphic orbi-spheres  $u : \PB(\Lambda) \to  \PB(\Lambda_0)$ and $u' : \PB(\Lambda') \to \PB(\Lambda_0)$ in $\WT{\mathcal{M}}$, and write $z_i$, $z_i'$ for (orbi-)marked point in $\PB(\Lambda)$, $\PB(\Lambda')$ respectively. By the definition of $\WT{\mathcal{M}}_{0,4,d}(1)$ \eqref{eqn:tempM}, we have $u(z_1) = u'(z'_1) = x_1$.
$u$ and $u'$ induce the linear liftings $\WT{u}(z) = \xi z$ and $\WT{u'}(z) = \xi' z$ together with sublattices $\Lambda_u = \xi\cdot \Lambda$ and $\Lambda_{u'} = \xi \cdot \Lambda'$ by Proposition \ref{prop:lifting}. Then we have the following lemma.

\begin{lemma}\label{Lem:equiv_rel}
$\Lambda_u$ and $ \Lambda_{u'}$ are the same sublattice in $\Lambda_0$ if and only if there is a biholomorphism of orbifolds, $\psi : \PB(\Lambda) \to \PB(\Lambda')$, between domains of $u$ and $u'$ such that $\psi (z_1) = z_1'$ and $u= u' \circ \psi$.
\end{lemma}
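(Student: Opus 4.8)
The plan is to prove both implications by passing to the universal covers $\C$ and using the linear liftings produced by Proposition~\ref{prop:lifting}.

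\medskip

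\emph{The ``if'' direction.} Suppose such a biholomorphism $\psi : \PB(\Lambda) \to \PB(\Lambda')$ with $\psi(z_1)=z_1'$ and $u = u'\circ\psi$ exists. First I would lift $\psi$ to a biholomorphism $\WT\psi : \C \to \C$ of universal covers fixing the base point $0$ (using the lifting theorem from Appendix~\ref{app:orbcovth} together with the fact that $\psi$ is a covering map, being a biholomorphism of orbifolds; alternatively $\psi$ itself lifts because $\C$ is simply connected and $\psi\circ\pi$ is an orbifold covering of $\PB(\Lambda')$). By elementary complex analysis a base-point-preserving biholomorphism of $\C$ is $z\mapsto \eta z$ for some $\eta\in\C^{\times}$, and since $\WT\psi$ covers $\psi$ it must carry the lattice $\Lambda=\pi^{-1}(z_1)$ onto $\Lambda'=(\pi')^{-1}(z_1')$, i.e. $\eta\Lambda=\Lambda'$. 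Now the uniqueness part of the lifting theorem applied to the two liftings $\WT u$ and $\WT{u'}\circ\WT\psi$ of the map $u = u'\circ\psi$ (both lift it and both fix $0$) forces $\WT u = \WT{u'}\circ\WT\psi$, hence $\xi = \xi'\eta$ as scalars. Therefore $\Lambda_u = \xi\Lambda = \xi'\eta\Lambda = \xi'\Lambda' = \Lambda_{u'}$.

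\medskip

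\emph{The ``only if'' direction.} Conversely assume $\Lambda_u = \Lambda_{u'}$, i.e. $\xi\Lambda = \xi'\Lambda'$. Set $\eta := \xi'^{-1}\xi \in \C^{\times}$, so that $\eta\Lambda = \Lambda'$. Multiplication by $\eta$ defines a linear biholomorphism $\WT\psi(z)=\eta z$ of $\C$ sending $\Lambda$ to $\Lambda'$; since it also commutes with $z\mapsto -z$, it descends to a biholomorphism of complex orbifolds $\psi : \PB(\Lambda)=[E_\Lambda/\Z_2] \to \PB(\Lambda')=[E_{\Lambda'}/\Z_2]$ (this uses precisely the bookkeeping in \ref{subsec:cpxstrdomain}: a lattice isomorphism induces a biholomorphism of the quotient orbifolds), and $\WT\psi(0)=0$ gives $\psi(z_1)=z_1'$. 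It remains to check $u = u'\circ\psi$. Both sides are orbifold maps $\PB(\Lambda)\to\PB(\Lambda_0)$ sending $z_1$ to $x_1$; their linear liftings are $\WT u(z)=\xi z$ and $\WT{u'}\circ\WT\psi(z) = \xi'\eta z = \xi z$, which agree. Since a map between these orbifolds is determined by its lift to universal covers (by uniqueness of the lift given the base-point condition, again Appendix~\ref{app:orbcovth}), we conclude $u = u'\circ\psi$.

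\medskip

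\textbf{Main obstacle.} The routine complex-analytic and lattice computations are harmless; the point requiring care is the interplay with the orbifold covering formalism — namely justifying that (a) a biholomorphism of the orbifolds $\PB(\Lambda)$, $\PB(\Lambda')$ is an orbifold covering map so that the lifting theorem applies, (b) the lift $\WT\psi$ is unique given $\WT\psi(0)=0$, and (c) an orbifold map is determined by its lift to the (orbifold) universal cover together with a base-point normalization, so that equality of liftings $\WT u = \WT{u'}\circ\WT\psi$ really implies $u = u'\circ\psi$ downstairs. All three are standard consequences of the orbifold covering theory recalled in Appendix~\ref{app:orbcovth} (and mirror the ordinary topological statements), but they should be invoked explicitly rather than taken for granted, since the orbi-points are exactly where naive arguments could fail. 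One should also note that $\eta$ need not have modulus $1$, so $\psi$ is a genuine biholomorphism of orbifolds carrying one complex structure $j_\Lambda$ to another $j_{\Lambda'}$, consistent with the remark after Definition (that $\PB(\Lambda)$ and $\PB(\alpha\Lambda)$ are biholomorphic).
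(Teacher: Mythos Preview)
Your approach is essentially the same as the paper's: pass to universal covers via Proposition~\ref{prop:lifting}, reduce everything to linear maps $z\mapsto \xi z$, and compare lattices. Your ``only if'' direction matches the paper's construction verbatim.

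There is one small imprecision in your ``if'' direction. You write that uniqueness of the lift forces $\WT u = \WT{u'}\circ\WT\psi$. In fact the base point $0\in\C$ sits over the orbifold point $x_1$, and the deck group of $p:\C\to\PB(\Lambda_0)$ contains $z\mapsto -z$, which fixes $0$. So two lifts of $u\circ\pi$ that both send $0$ to $0$ can still differ by this sign; one only gets $\WT u = \pm\,\WT{u'}\circ\WT\psi$, as the paper records. This does not affect the conclusion, since $-\Lambda_{u'}=\Lambda_{u'}$, but you should state the $\pm$ explicitly rather than appeal to a uniqueness that does not quite hold at an orbi-basepoint. (Your ``Main obstacle'' paragraph anticipates exactly this kind of subtlety; the sign is the concrete place where it shows up.)
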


Readers are warned that $\psi$ does not give an equivalence between $u$ and $u'$ in the sense of Definition \ref{def:Mifjkl} in general, since $\psi$ may not preserve the other three orbi-marked points. Since $\psi$ is an orbifold isomorphism, at least we have $\psi (z_j) = z_{\tau(j)}'$ for a permutation $\tau$ on $\{1,2,3,4\}$ with $\tau(1)=1$.

\begin{proof}
First suppose that $\Lambda_u=\xi \Lambda$ and $\Lambda_{u'}=\xi \Lambda'$ are the same sublattice in $\Lambda_0$.
We set $\WT{\psi}$ to be the linear map $z \mapsto (\xi')^{-1}\xi z$ on $\C$, which obviously induces a map $\psi: \PB(\Lambda) \to \PB(\Lambda')$. Since $\WT{u'} \circ \WT{\psi}  = \WT{u}$, we see that the induced map $u= u' \circ \psi$. $\psi(z_1) = z_1'$ is automatic, since every maps we consider here preserve the base points.

Conversely, assume that there is a biholomorphism of orbifolds $\psi$ with $\psi(z_1)= z_1'$ and $u = u' \circ \psi$. We take the lift $\psi$ to the level of universal covering to get $\WT{\psi}:\C \to \C$, which is a linear map by the same argument as in the proof of Proposition \ref{prop:lifting}. Moreover, it is easy to see that $\WT{\psi} (\Lambda) = \Lambda'$.
\begin{equation}\label{dia:prism}
\xymatrix{
{} && {} & (\C,0) \ar[dd]^{p} \\
(\C,0) \ar[rr]_{\WT{\psi}} \ar[rrru]^{\WT{u}} \ar[dd]_{\pi} && (\C,0) \ar[ru]_{\WT{u'}} \ar@<0.7ex>[dd]_{\pi'} & {} \\
{} && {} & (\PB(\Lambda_0),x_1) \\
(\PB(\Lambda), z_1) \ar[rr]_{\psi} \ar[rrru]_{u} && (\PB(\Lambda'),z_1') \ar[ru]_{u'} & {}\\
}
\end{equation}

Consider two maps $\WT{u}$ and $\WT{u'} \circ \WT{\psi}$. Both of maps are the lifting of the same map $u  \circ \pi (= u' \circ \psi \circ \pi)$ and preserve the base point $0$. Thus they must differ by the deck transformation for $\C (\stackrel{p}{\longrightarrow} \PB(\Lambda_0))$ which fixes the origin. Therefore, we have $\WT{u}  = \pm \WT{u'} \circ \WT{\psi}$. (i.e. the upper triangle in the diagram \eqref{dia:prism} commutes up to sign.) Finally,
$$ \Lambda_u = \WT{u}(\Lambda) = \pm \WT{u'} ( \WT{\psi} (\Lambda) ) = \pm \WT{u'} (\Lambda') = \Lambda_{u'},$$
which finishes the proof.
\end{proof}

In particular, if $u'  \sim u$ (see Definition \ref{def:Mifjkl}), then $\Lambda_{u'} = \Lambda_u$. Recall that $\mathcal{M}_{0,4,d} (\D{1} , \D{j}, \D{k}, \D{l})$ is the quotient of $\WT{\mathcal{M}}_{0,4,d} (\D{1} , \D{j}, \D{k}, \D{l})$ by $\sim$ and
$\mathcal{M}_{0,4,d}(1) =\bigcup_{j,k,l} \mathcal{M}_{0,4} (\D{1} , \D{j}, \D{k}, \D{l})$. Hence, Proposition \ref{prop:lifting} and Lemma \ref{lem:degindcomp} gives us the correspondence
\begin{equation}\label{eqn:orbsp_lattice}
\Theta_d: \mathcal{M}_{0,4,d}(1) \to \mathcal{L}_d (\Lambda_0)=\{ \Lambda \subset \Lambda_0 : \Lambda \cong \Z^2,\,\ind (\Lambda)= d\}, \quad [u] \mapsto \Lambda_u
\end{equation}
which assigns a lattice to each (class of) holomorphic orbi-sphere that contributes to $\langle \D{1} , \D{j}, \D{k}, \D{l} \rangle_{0,4,d}$. However, $\Theta$ can not be $1$-to-$1$ since the same underlying map $u$ give rise to several distinct holomorphic orbi-spheres depending on the positions of their domain marked points $z_2,z_3,z_4$ (c.f. Lemma \ref{Lem:equiv_rel}). Indeed, we have the following.

\begin{prop}\label{thm:sublattice_corr}
The correspondence $\Theta_d$ \eqref{eqn:orbsp_lattice} is a surjective $6$-to-$1$ map, and each pair of classes $[u]$ and $[u']$ in $\Theta_d^{-1} (\Lambda)$ is related by a domain reparameterization $\psi$ that fixes the first marking $z_1$, but permutes other three; $z_2,z_3,z_4$.
\end{prop}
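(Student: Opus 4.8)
The plan is to establish the two assertions of Proposition \ref{thm:sublattice_corr} separately: surjectivity, and the $6$-to-$1$ fiber structure with the description of fibers via marked-point-permuting reparameterizations. For surjectivity, I would start with an arbitrary $\Lambda \in \mathcal{L}_d(\Lambda_0)$, i.e. a rank-$2$ sublattice of index $d$. The inclusion $\Lambda \hookrightarrow \Lambda_0$ is linear (multiplication by $1 \in \C^\times$ works), so it descends to a holomorphic map $E_\Lambda \to E_{\Lambda_0}$ and then, since $z \mapsto -z$ is compatible on both sides, to a holomorphic orbifold map $u_\Lambda : \PB(\Lambda) \to \PB(\Lambda_0)$ of degree $\ind(\Lambda) = d$ (by Lemma \ref{lem:degindcomp} applied in reverse, or directly from \eqref{eqn:degdet}). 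One must check $u_\Lambda$ lands in $\WT{\mathcal{M}}_{0,4,d}(1)$: it sends $z_1 = [0]$ to $x_1 = [0]$, it sends the $2$-torsion points $[\tfrac12\Lambda]$ into $[\tfrac12\Lambda_0]$ hence orbi-points to orbi-points, and its local model at $z_1$ is \eqref{eqn:locorbmor} because the linear lift is the identity near $0$. Assigning the markings $z_2,z_3,z_4$ arbitrarily among the three nonzero $2$-torsion images then gives a class $[u_\Lambda]$ with $\Theta_d([u_\Lambda]) = \Lambda$. This part is essentially bookkeeping.

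The substantive part is the fiber count. Fix $\Lambda \in \mathcal{L}_d(\Lambda_0)$ and let $[u], [u']$ both map to $\Lambda$ under $\Theta_d$. By Lemma \ref{Lem:equiv_rel}, $\Lambda_u = \Lambda_{u'}$ gives a biholomorphism of orbifolds $\psi : \PB(\Lambda) \to \PB(\Lambda')$ with $\psi(z_1) = z_1'$ and $u = u' \circ \psi$; since $\psi$ is an orbifold isomorphism it permutes the four orbi-points, and $\psi(z_1)=z_1'$ forces this permutation to fix $z_1$, so it restricts to a permutation $\tau$ of $\{z_2,z_3,z_4\}$. Conversely, composing with $\psi$ changes the marking data only by $\tau$, so the fiber $\Theta_d^{-1}(\Lambda)$ is precisely the set of $\sim$-classes obtained from a single underlying map by the $S_3$-action of relabeling the last three markings. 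It remains to see this action is free, so that the fiber has exactly $|S_3| = 6$ elements. For this I would argue that a nontrivial permutation $\tau$ of $\{z_2,z_3,z_4\}$ can never be realized by a biholomorphism $\phi : \PB(\Lambda) \to \PB(\Lambda)$ fixing $z_1$ with $u = u \circ \phi$: lifting $\phi$ to $\C$ gives a linear map $z \mapsto \eta z$ with $\eta\Lambda = \Lambda$, and $u = u\circ\phi$ together with injectivity of $\WT u = \xi z$ on the relevant lattices forces $\eta = \pm 1$; but $z \mapsto -z$ acts trivially on $\PB(\Lambda)$ and $z\mapsto z$ fixes all four $y_j$, so $\tau$ is trivial. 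Hence distinct $\tau \in S_3$ give distinct classes, and $|\Theta_d^{-1}(\Lambda)| = 6$.

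The main obstacle I anticipate is the freeness step — ruling out "accidental" automorphisms of $\PB(\Lambda)$ that permute the nonzero $2$-torsion points nontrivially while commuting with $u$. Generic lattices $\Lambda$ have automorphism group only $\{\pm 1\}$ acting on $E_\Lambda$, which is exactly what is needed, but special lattices (e.g. square or hexagonal $\Lambda$) have extra automorphisms; the point is that these extra automorphisms do not fix the base point $0$ in the appropriate sense, or do not intertwine with $u$ because $\WT u$ is injective on $\Lambda$ and $\WT u \Lambda = \Lambda_u \subset \Lambda_0$ is a fixed sublattice. I would phrase the argument so that it uses only $\WT u(z) = \xi z$ and $\WT u \circ \WT\phi = \pm \WT u$ (the sign ambiguity coming from deck transformations of $p$, exactly as in the proof of Lemma \ref{Lem:equiv_rel}), which immediately gives $\WT\phi = \pm \mathrm{id}$ independent of any special structure of $\Lambda$. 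Once that is in place, everything else is formal, and combining surjectivity with the free $S_3$-action on each fiber yields the stated conclusion.
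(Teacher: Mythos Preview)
Your proposal is correct and follows the same overall architecture as the paper's proof: surjectivity via the map induced by the identity $\C \to \C$ (this is exactly Lemma \ref{lem:idlift}), identification of the fiber with an $S_3$-orbit of marking configurations, and a freeness check.

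The one place where you diverge is the freeness step, and your argument is in fact sharper than the paper's. The paper argues that an equivalence between two distinct configurations would force $\Lambda$ to carry a multiplicative symmetry $z\mapsto\zeta z$ with $\zeta\neq\pm 1$, then observes that among sublattices of $\Lambda_0=\Z[\sqrt{-1}]$ only scalings of $\Lambda_0$ itself admit such a symmetry (namely $\zeta=\sqrt{-1}$), and finally dismisses this residual case by hand. Your argument bypasses the lattice-symmetry classification entirely: you use the full equivalence condition $u=u\circ\phi$, lift to $\WT u\circ\WT\phi=\pm\WT u$ via the deck-transformation reasoning already present in Lemma \ref{Lem:equiv_rel}, and conclude $\WT\phi=\pm\mathrm{id}$ directly, with no case analysis on $\Lambda$. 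This is cleaner and makes transparent why the special lattices you worried about cause no trouble---the constraint $u=u\circ\phi$ is strictly stronger than $\eta\Lambda=\Lambda$ and already pins down $\eta$.
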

\noindent(Thus the appearance of $6$ in the statement is due to $|S_3|= 3! = 6$.)

\vspace{0.2cm}

We first prove the following simple lemma.
\begin{lemma}\label{lem:idlift}
For a given sublattice $\Lambda$ of $\Lambda_0$, there is a holomorphic orbi-sphere $u$ whose domain is $\PB(\Lambda)$ such that $ \Lambda_u  = \Lambda$. i.e. the linear lifting of $u$ is the identity. (In particular, $\Theta$ is surjective.)
\end{lemma}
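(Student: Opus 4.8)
The plan is to construct $u$ directly from the sublattice $\Lambda \subset \Lambda_0$ by declaring its linear lifting to be the identity map on $\C$. Concretely, I would start with the inclusion $\iota : \C \to \C$, $\iota(z) = z$, and observe that since $\Lambda \subset \Lambda_0$, this descends through the quotient maps to a well-defined holomorphic map of orbifolds
\begin{equation*}
\bar{\iota} : E_\Lambda = \C/\Lambda \longrightarrow E_{\Lambda_0} = \C/\Lambda_0.
\end{equation*}
Both $\Lambda$ and $\Lambda_0$ are invariant under $z \mapsto -z$, and $\iota$ commutes with this involution, so $\bar{\iota}$ is $\Z_2$-equivariant and hence induces a map of quotient orbifolds
\begin{equation*}
u : \PB(\Lambda) = [E_\Lambda/\Z_2] \longrightarrow [E_{\Lambda_0}/\Z_2] = \PB(\Lambda_0).
\end{equation*}
By construction the linear lifting of $u$ is $\widetilde{u} = \iota$, so $\Lambda_u = \widetilde{u}(\Lambda) = \Lambda$.

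The remaining steps are to check that this $u$ genuinely lands in $\WT{\mathcal{M}}_{0,4,d}(1)$ for the appropriate insertions, i.e. that it satisfies all the bullet-point conditions in Definition \ref{def:Mifjkl}. First, $u$ is $(j_\Lambda, J_{std})$-holomorphic because $\iota$ is holomorphic and the complex structures on domain and target are precisely those inherited from $\C$ via the respective lattices. Second, $u$ sends the four orbi-marked points $[\tfrac12\Lambda]$ of $\PB(\Lambda)$ into the four orbi-points $[\tfrac12\Lambda_0]$ of $\PB(\Lambda_0)$: since $\Lambda \subset \Lambda_0$ we have $\tfrac12\Lambda \subset \tfrac12\Lambda_0$, so each $y_j$ maps to some $x_{\sigma(j)}$; in particular $y_1 = [0] \mapsto [0] = x_1$, so the base-point condition holds and we may take $z_1 = y_1$. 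Third, the local model: near an orbi-point, $\widetilde{u} = \iota$ is the identity on a disk in $\C$, so the local lifting of $u$ is exactly the map $\C/\Z_2 \stackrel{[id]}{\to} \C/\Z_2$ of \eqref{eqn:locorbmor}, which in particular means the injectivity condition \eqref{eqn:injpi1} on isotropy groups is satisfied (it is the identity $\Z_2 \to \Z_2$). Finally, $\deg u = \ind(\Lambda) = d$ by the computation in Lemma \ref{lem:degindcomp} (or directly: $|\ker \bar{\iota}| = |\Lambda_0/\Lambda| = d$), so $u$ is nonconstant and of the required degree. Choosing the insertions $(\D{i},\D{j},\D{k},\D{l})$ to record $u(y_m) = x_{\sigma(m)}$ then exhibits $u$ as an element of $\WT{\mathcal{M}}_{0,4,d}(\D{1},\D{j},\D{k},\D{l}) \subset \WT{\mathcal{M}}_{0,4,d}(1)$ with $\Lambda_u = \Lambda$, and surjectivity of $\Theta$ follows by passing to the class $[u] \in \mathcal{M}_{0,4,d}(1)$.

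I expect the only mildly delicate point — the main obstacle, such as it is — to be the careful verification that the identity on $\C$ descends to a bona fide \emph{orbifold} morphism, as opposed to merely a map of underlying spaces, and that its local lifting data match \eqref{eqn:locorbmor} exactly so that the resulting $u$ is admissible in the sense required by the moduli problem. This is essentially a bookkeeping check with orbifold charts: one must confirm that at each of the four cone points the local lift is the identity germ (up to the $\Z_2$-action), which is immediate here precisely because $\widetilde{u}$ is the \emph{identity} rather than a general linear map. Once that is in place, everything else is a direct consequence of $\Lambda \subset \Lambda_0$ together with Lemma \ref{lem:degindcomp}, and the statement — including surjectivity of $\Theta$ — is proved.
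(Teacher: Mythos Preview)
Your proposal is correct and follows exactly the same approach as the paper: take the identity map on $\C$, observe that $\Lambda \subset \Lambda_0$ and $\Z_2$-equivariance let it descend to a holomorphic orbifold map $u:\PB(\Lambda)\to\PB(\Lambda_0)$, and note that its linear lifting is the identity so $\Lambda_u=\Lambda$. Your write-up is in fact more thorough than the paper's, explicitly verifying each bullet of Definition~\ref{def:Mifjkl} and the local model \eqref{eqn:locorbmor}, whereas the paper simply asserts these checks are clear.
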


\begin{proof}
Let us consider the identity map on $\C$, $id: \C \to \C$. Since $id$ sends $\Lambda$ into $\Lambda_0$ and $Z_2$-equivariant, it induces a holomorphic map
$$ u: \PB(\Lambda) = [E_{\Lambda}/ \Z_2] \to \PB(\Lambda_0) = [E_{\Lambda_0}/ \Z_2]$$
Since the image of four orbifold points in $\PB(\Lambda)$ lies in the set of orbifold points in $\PB(\Lambda_0)$ and the local behavior of $u$ around each orbifold point is simply $id$ on the cover, it is clear that $u$ is an element of $\mathcal{M}_{0,4,d}(1)$. Moreover, since the linear lifting of $u$ is the identity map on $\C$, $\Lambda_u = id (\Lambda) = \Lambda$, which proves the lemma.
\end{proof}

Now we are ready to prove the proposition.

\begin{proof}[Proof of Proposition \ref{thm:sublattice_corr}]

For any sublattice $\Lambda$, we already have $u :\PB(\Lambda) \to \PB(\Lambda_0)$ whose linear lifting is the identity map so that $\Lambda_u = \Lambda$ by Lemma \ref{lem:idlift}. In particular, $\Theta_d$ is a surjective map.  However, such $u$ produces more than one  elements in $\mathcal{M}_{0,4,d} (1)$ since we have several choices for the locations of $z_2,z_3,z_4$ in the domain $\PB(\Lambda)$ (see Remark \ref{rmk:234s3} for related discussion). Clearly, there are $3!=6$ such choices (parametrized by $S_3$; two examples of such configurations are drawn in Figure \ref{fig:sublat_markconfig}).

It is elementary to check that these six configurations define mutually different elements in $\Theta_d^{-1} (\Lambda)$. In fact, if there is an equivalence between two of them, $\Lambda$ should admit a symmetry represented by $z \mapsto \zeta z$ (for $\zeta \neq \pm 1$). Among sublattices of $\Lambda_0$, only $ \Lambda_0$ and its scaling can have such symmetry with $\zeta = \sqrt{-1}$, which obviously can not preserve the positions of markings $z_2,z_3,z_4$.

It remains to show that any element in $\Theta_d^{-1} (\Lambda)$ is equivalent to one of these six holomorphic orbi-spheres represented by $u$ whose linear lifting is the identity.
For $u' : \PB(\Lambda') \to \PB(\Lambda_0)$ with $\Lambda_{u'} = \Lambda$, take the linear lifting $z \mapsto \xi' z$ of $u'$ on $\C$.
\begin{equation*}
\xymatrix{ \C \ar[r]^{z \mapsto \xi'^{-1} z} \ar[d]_{\pi}& \C \ar[r]^{z \mapsto \xi' z} \ar[d]^{\pi'} & \C \ar[d]^{p} \\
\PB(\Lambda) \ar[r]^{\phi} &\PB(\Lambda') \ar[r]^{u'}& \PB(\Lambda_0) \\
}
\end{equation*}
Define $\phi : \PB(\Lambda) \to \PB(\Lambda')$ to be the biholomorphism induced by the linear map $z \mapsto \xi'^{-1}$ as in the above diagram. Here, the domain of $\phi$ should be $\PB(\Lambda)$ since $\Lambda = \Lambda_{u'} (= \xi' \Lambda')$ implies $\xi'^{-1} \Lambda = \Lambda'$. If we set the marked points in the domain of $u' \circ \phi$ to be $\phi^{-1}$ of those in the domain of $u'$, we see that $u' \circ \phi \sim u'$ (i.e. $[u' \circ \phi] = [u']$ in $\mathcal{M}_{0,4,d}(1)$) and the linear lifting of $u' \circ \phi$ is identity by the construction.
\end{proof}

\begin{figure}[h]
\begin{center}
\includegraphics[height=1.6in]{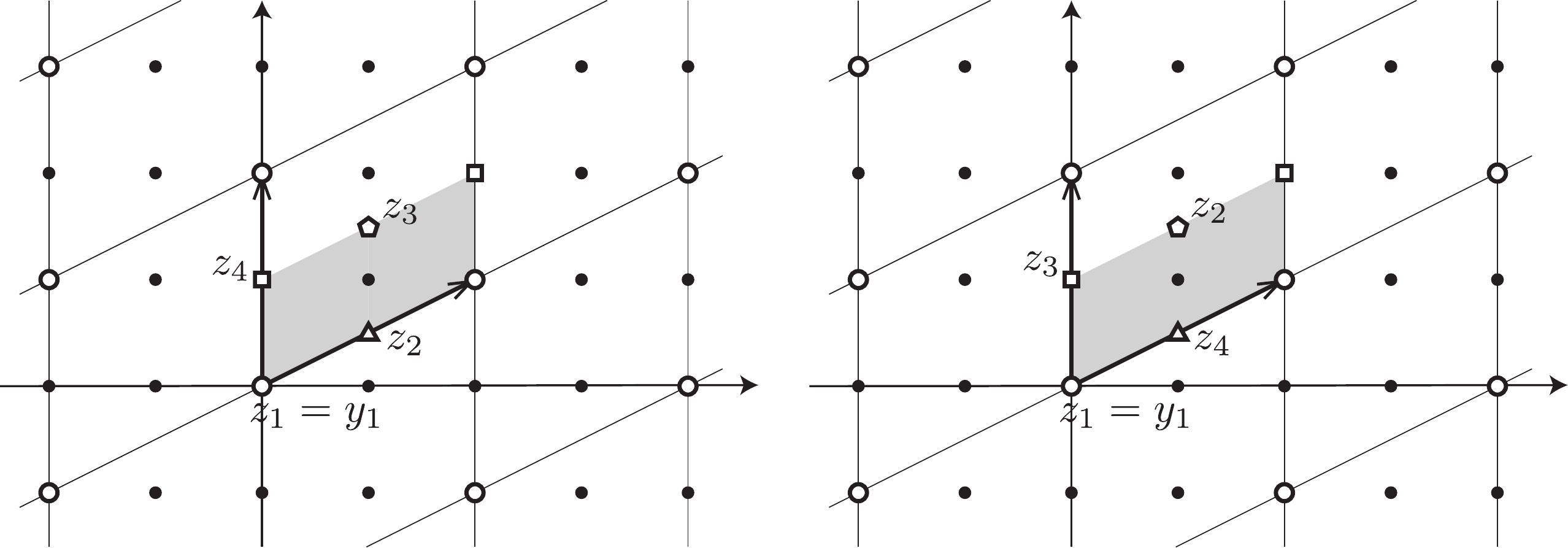}
\caption{$(u,\tau)$ for $\tau =id$ and $\tau = (2,4,3)$ }\label{fig:sublat_markconfig}
\end{center}
\end{figure}

The above proof provides a  concrete description of the fiber of $\Theta_d$. From the proof of the theorem, we see that an element in $\Theta_{d}^{-1} (\Lambda)$ can be represented by the projection of the identity as in the diagram below:
\begin{equation*}
\xymatrix{ \C \ar[r]^{id} \ar[d]_{\pi} &  \C \ar[d]^{p} \\
\PB(\Lambda) \ar[r]^{u} & \PB(\Lambda_0)}
\end{equation*}
Recall from \eqref{eqn:orbyi} that $y_1 = 0$, $y_2 = \frac{v_1}{2}$, $y_3= \frac{v_1 + v_2}{2}$, $y_4=\frac{v_2}{2}$ (modulo $\Lambda$)
for a chosen $\Z$-basis $\{v_1,v_2\}$ of $\Lambda$. (The choice of a basis will be fixed in \ref{subsec:clsublat}.)
As we pointed out in the proof of Proposition \ref{thm:sublattice_corr}, $u$ can represent six different classes in $\mathcal{M}_{0,4,d} (1)$ according to the position of marked points, and we have the following choices of $\{z_i\}$:
\begin{equation*}\label{eqn:markedarrange}
(z_1,z_2,z_3,z_4) = (y_{\tau(1)},y_{\tau(2)},y_{\tau(3)},y_{\tau(4)})
\end{equation*}
for a permutation $\tau$ with $\tau(1)=1$ (see Figure \ref{fig:sublat_markconfig}). Here $z_1=y_1$ due to Assumption \ref{assm:firstmark}. Thus $\tau$ can be thought of as an element of $S_3$.

From now on, we will always choose a representative $u$ of a class in $\mathcal{M}_{0,4,d} (1)$ such that the linear lifting of $u$ is identity.

%----------------------------------------------------------
\subsection{Classification of sublattices}\label{subsec:clsublat}
Proposition \ref{thm:sublattice_corr} reduces our counting problem essentially to a classification of 2-dimensional sublattices of $\Lambda_0$. In this section we investigate the counting problem of sublattices $\Lambda$ with $\ind (\Lambda)=d$, that is, finding the number of elements in $\mathcal{L}_d (\Lambda_0)$ ((3) of Definition \ref{def:indindlat}).
For this, we first choose a special $\Z$-basis for a sublattice $\Lambda$.
\begin{lemma}\label{lem:basisfix}
For any sublattice $\Lambda \subset \Lambda_0$, there is a $\Z$-basis $\{ w_1, w_2 \}$ of $\Lambda$ such that $w_1 = (h,0)$ for a positive divisor $h$ of $\ind ( \Lambda)$.
\end{lemma}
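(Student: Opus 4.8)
The plan is to exploit the subgroup structure of $\Lambda_0 = \Z\cdot 1 + \Z\cdot\sqrt{-1} \cong \Z^2$ and find, inside the given sublattice $\Lambda$, a nonzero element lying on the ``horizontal axis'' $\Z\cdot 1 = \{(n,0): n\in\Z\}$; this element will furnish the first basis vector $w_1 = (h,0)$. First I would consider the projection $\mathrm{pr}_1 : \Lambda_0 \to \Z$ onto the first coordinate. Its restriction to $\Lambda$ has image an ideal (subgroup) $k\Z \subseteq \Z$ for some $k \geq 1$ (the image is nonzero because $\Lambda$ has rank $2$), and its kernel is exactly $\Lambda \cap (\Z\cdot\sqrt{-1})$, which is a rank-one subgroup of $\Z\cdot\sqrt{-1}$, hence of the form $\Z\cdot(0,m)$ for some $m\geq 1$. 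The short exact sequence $0 \to \Z(0,m) \to \Lambda \to k\Z \to 0$ splits (as $k\Z$ is free), so $\Lambda$ has a $\Z$-basis of the form $\{(k, s), (0, m)\}$ for some $s \in \Z$; by subtracting a suitable integer multiple of $(0,m)$ from the first generator we may assume $0 \leq s < m$. This already shows $\Lambda$ has a basis whose second vector is vertical, but I want the \emph{first} vector to be horizontal, so instead I run the symmetric argument with the projection onto the second coordinate, or — cleaner — I observe directly that $\Lambda \cap (\Z\cdot 1)$ is a nonzero rank-one subgroup of $\Z\cdot 1$ (nonzero since, e.g., $m$ copies of $(k,s)$ minus $s$ copies of $(0,m)$ gives $(mk,0)\in\Lambda$), hence equals $\Z\cdot(h,0)$ for a unique $h\geq 1$.

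Next I would verify that $w_1 := (h,0)$ extends to a $\Z$-basis of $\Lambda$. Since $w_1$ generates $\Lambda \cap (\Z\cdot 1)$ and the quotient $\Lambda / (\Lambda\cap\Z\cdot 1)$ injects into $\Lambda_0 / (\Z\cdot 1) \cong \Z$, this quotient is free of rank one; lifting a generator gives $w_2\in\Lambda$ with $\{w_1,w_2\}$ a $\Z$-basis of $\Lambda$. Alternatively, one can simply take the basis $\{(k,s),(0,m)\}$ constructed above and apply an explicit $\mathrm{SL}(2,\Z)$ change of basis: using that $h = \gcd$ of the first coordinates of all elements of $\Lambda$, which works out to $h = k\cdot m / \mathrm{lcm}(\ldots)$ — but I would avoid the messy $\gcd$ bookkeeping and instead argue structurally as just described.

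Finally I must show $h \mid \ind(\Lambda)$. Using the basis $\{w_1,w_2\} = \{(h,0),(a,b)\}$ of $\Lambda$ (with $b = m \geq 1$, possibly after adjusting $w_2$), formula \eqref{eqn:degdet} — or an elementary determinant computation — gives $\ind(\Lambda) = |\det\begin{pmatrix} h & a \\ 0 & b\end{pmatrix}| = h b$. Hence $h \mid \ind(\Lambda)$ and indeed $\ind(\Lambda)/h = b = m$ is a positive integer, so $h$ is a positive divisor of $\ind(\Lambda)$ as claimed. (One should also remark $h\geq 1$ strictly, i.e.\ $w_1\neq 0$, which follows from the rank-two hypothesis as noted above.)

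I expect the only genuinely delicate point is the bookkeeping in passing from the "second vector vertical" normal form to the "first vector horizontal" one while keeping everything a genuine $\Z$-basis — this is where a hasty argument could slip. The cleanest route, which I would adopt, is the intrinsic description $\Z\cdot(h,0) = \Lambda\cap(\Z\cdot 1)$ together with the splitting of $0\to\Lambda\cap(\Z\cdot 1)\to\Lambda\to\Lambda/(\Lambda\cap\Z\cdot 1)\to 0$; this sidesteps all coordinate juggling. Everything else (the exact sequences split because the quotients are free abelian; the determinant of an upper-triangular integer matrix is the product of the diagonal entries) is routine.
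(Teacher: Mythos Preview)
Your argument is correct and complete; the only inaccuracy is the parenthetical remark ``$b=m$'': in fact if one traces through your two constructions one finds $h = mk/\gcd(s,m)$ and $b = \gcd(s,m)$, not $b=m$. This is harmless since all you actually use is $\ind(\Lambda)=hb$ with $b\in\Z_{>0}$.

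Your route is genuinely different from the paper's. The paper proceeds constructively: starting from an arbitrary positively oriented basis $v_1=(\alpha,\beta)$, $v_2=(\gamma,\delta)$, it sets $g=\gcd(\beta,\delta)$, writes $\beta=g\beta'$, $\delta=g\delta'$, picks B\'ezout coefficients $\beta'x+\delta'y=1$, and applies the explicit unimodular change of basis $w_1=\delta'v_1-\beta'v_2$, $w_2=xv_1+yv_2$; a direct coordinate check then gives $w_1=(h,0)$ and $w_2=(\ast,g)$ with $hg=d$. In contrast, you characterise $w_1$ intrinsically as the positive generator of $\Lambda\cap(\Z\cdot 1)$ and obtain $w_2$ by splitting the short exact sequence $0\to\Lambda\cap(\Z\cdot 1)\to\Lambda\to\Lambda/(\Lambda\cap(\Z\cdot 1))\to 0$, whose right-hand term is free because it embeds in $\Lambda_0/(\Z\cdot 1)\cong\Z$. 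The paper's approach is the Hermite-normal-form algorithm made explicit, and it has the advantage of immediately identifying the second coordinate of $w_2$ as $g=\gcd(\beta,\delta)$, which is convenient for the sublattice enumeration in the next lemma. Your approach is cleaner conceptually and avoids all coordinate bookkeeping, at the cost of not giving an explicit formula for $w_2$.
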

\begin{proof}
Choose any $\Z$-basis $\{ v_1, v_2 \}$ of $\Lambda$ which is positively oriented as basis of $\R^2 \cong \C$. We may write $v_1 = (\alpha,\beta)$ and $v_2 = (\gamma,\delta)$ for some $\alpha,\beta,\gamma,\delta \in \Z$ with respect to the standard $\Z$-basis $\{ 1,\sqrt{-1} \}$ of $\Lambda_0 \subset \C$. Then we have $d:=\alpha \delta - \beta \gamma>0$. Recall from \eqref{eqn:degdet} that the index of $\Lambda$ is given by $d$ in this case.

Let $g := \gcd(\beta,\delta) \in \N$, and put $\beta=g\beta'$, $\delta=g\delta'$ with $\gcd(\beta',\delta') = 1$. Hence, one can find integers $x$ and $y$ satisfying
$$\beta' x + \delta' y = 1.$$
Now, we set the new basis to be
$$ w_1 = \delta' v_1 -\beta' v_2 ,\quad w_2 = x v_1 + y v_2.$$
It is clearly a basis of $\Lambda$ since
\begin{equation*}
	\left(
		\begin{array}{c c}
			\delta' & x\\
			-\beta' & y
		\end{array}
	\right) \in SL_2 (\Z),
\end{equation*}
In terms of coordinates, $\{w_1,w_2\}$ can be written as
$$w_1 = (h,0), \quad w_2=( \alpha x +  \gamma y,g)$$
where $h$ satisfies $hg = d$ since $\det (w_1 \, w_2) = \det(v_1 \,v_2)$. Therefore $h$ is a positive divisor of $d$ (since $g>0$).
\end{proof}

Now we are ready to compute the number of elements in $\mathcal{L}_d (\Lambda_0)$, which will give rise to the number of certain holomorphic orbi-spheres in $\PB$ due to Proposition \ref{thm:sublattice_corr}.

\begin{lemma}%\label{lem:appeardivsum}
The set $\mathcal{L}_{d} (\Lambda_0)$ of index $d$ sublattices of $\Lambda_0$ consists of $\mathfrak{D} (d)$ elements, where $\mathfrak{D}$ is the divisor sum function. (i.e. $\mathfrak{D} (n) := \sum_{k|n} k$ for $n \in \Z_{>0}$.)
\end{lemma}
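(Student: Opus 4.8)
The plan is to upgrade the $\Z$-basis produced by Lemma~\ref{lem:basisfix} to a genuinely canonical one --- the Hermite normal form --- and then count the resulting normal forms. Given $\Lambda \in \mathcal{L}_d(\Lambda_0)$, Lemma~\ref{lem:basisfix} supplies a $\Z$-basis $w_1 = (h,0)$, $w_2 = (m_0,g)$ with $h$ a positive divisor of $d$ and $g = d/h$. Replacing $w_2$ by $w_2 - k w_1$, where $k \in \Z$ is chosen so that the first coordinate of the result lies in $\{0,1,\dots,h-1\}$ (a change of basis of determinant $1$), we obtain a basis $w_1 = (h,0)$, $w_2 = (m,g)$ with $0 \le m < h$ and $hg = d$. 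This attaches to each $\Lambda$ a triple $(h,g,m)$ in the finite set $S_d := \{(h,g,m) : h,g \in \N,\ hg = d,\ 0 \le m < h\}$.

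Next I would check that $\Lambda \mapsto (h,g,m)$ is a bijection $\mathcal{L}_d(\Lambda_0) \to S_d$. Surjectivity is immediate: for $(h,g,m) \in S_d$ the lattice $\langle (h,0),(m,g)\rangle$ has index $hg = d$ by~\eqref{eqn:degdet} and clearly returns the triple $(h,g,m)$. For injectivity, the point is that $(h,g,m)$ can be read off from $\Lambda$ alone. Indeed $a(h,0) + b(m,g) = (ah+bm,\,bg)$ lies on the $x$-axis precisely when $b = 0$, so $\Lambda \cap (\R \times \{0\}) = h\Z \times \{0\}$; hence $h$, and therefore $g = d/h$, is intrinsic to $\Lambda$. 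Moreover the second coordinates occurring in $\Lambda$ are exactly $g\Z$, and the elements of $\Lambda$ with second coordinate equal to $g$ are the $(m + ah,\, g)$ for $a \in \Z$, so the residue of the first coordinate modulo $h$ is well defined and equals $m$ (as $0 \le m < h$). Thus $|\mathcal{L}_d(\Lambda_0)| = |S_d|$.

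Finally, counting $S_d$ is elementary: the factorizations $d = hg$ with $h,g \in \N$ correspond to the positive divisors $h \mid d$, and each such $h$ admits exactly $h$ values of $m$, whence
\[
|\mathcal{L}_d(\Lambda_0)| = |S_d| = \sum_{h \mid d} h = \mathfrak{D}(d).
\]
(This is the classical count of the $\sigma_1(d)$ subgroups of index $d$ in $\Z^2$.) The only step carrying real content is the uniqueness of the normal form, i.e.\ recovering $(h,g,m)$ from $\Lambda$, and that is handled cleanly by the axis-intersection argument above, so I do not expect a genuine obstacle.
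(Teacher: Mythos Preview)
Your proof is correct and follows essentially the same approach as the paper: both reduce to the Hermite normal form $\{(h,0),(m,g)\}$ with $hg=d$ and $0\le m<h$, then count $\sum_{h\mid d}h=\mathfrak{D}(d)$. Your version is slightly more explicit about uniqueness---you recover $(h,g,m)$ from $\Lambda$ via the axis-intersection argument, whereas the paper asserts that $w_1$ is the unique primitive vector on the positive real axis and that the $h$ choices of $m$ yield distinct sublattices without spelling out the recovery of $m$---but the content is the same.
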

\begin{proof}
Observe that $w_1$ is a primitive vector in $\Lambda$ in order to form a $\Z$-basis of $\Lambda$.
Hence, it is uniquely determined by the property that it lies along the positive real axis. Thus one can fix the first basis element by $w_1=(h,0)$ without ambiguity for a given $\Lambda \in \mathcal{L}_{d} (\Lambda_0)$. 

Since the index of $\Lambda$ is $d$, it gives some restriction on the second basis element, namely, $ w_2=(m,d/h)$ for some $m \in \Z$ (recall $h | d$ from the previous lemma). It is easy to see that the following $h$-tuples (obtained from $0 \leq m \leq h-1$) generate mutually different sublattices of $\Lambda_0$
$$\{(h,0), (0,d/h)\},\{(h,0), (1,d/h)\}, \cdots,\{(h,0), (h-1,d/h)\}.$$
(See Figure \ref{fig:basisfix} for examples.)
However, $\{(h,0), (0,d/h)\}$ and $\{(h,0), (h,d/h)\}$ generate the same sublattice since they are related by
\begin{equation*}
	T = \left(
		\begin{array}{c c}
			1 & 1\\
			0 & 1
		\end{array}
	\right) \in SL_2 (\Z).
\end{equation*}

In conclusion, there are exactly $h$ sublattices in $\mathcal{L}_d (\Lambda_0)$ with $w_1 = (h,0)$. As $h$ can vary over the set of positive divisors of $d$, we see that $|\mathcal{L}_d (\Lambda_0)| = \mathfrak{D} (d)$.
\end{proof}

In terms of basis of the sublattice, $|\mathcal{L}_d (\Lambda_0)|$ is the number of solutions of the determinant equation $\det\left(\begin{array}{cc}\alpha & \gamma \\\beta & \delta\end{array}\right) =d$ \eqref{eqn:mainadbcN1} where $ \left(\begin{array}{cc}\alpha & \gamma \\\beta & \delta\end{array}\right)$ is taken from $M_{2 \times 2} (\Z) / SL(2,\Z)$. We have just seen that the number of such solutions is exactly $\mathfrak{D}(d)$.

As a direct corollary from Proposition \ref{thm:sublattice_corr}, we obtain the following formula which recovers nonconstant terms of $f(q)$ in \eqref{eqn:nonconstf}.

\begin{theorem}\label{thm:lumpsum} The power series $\sum_{d \geq 1} |\mathcal{M}_{0,4,d} (1)| q^d$ equals to
\begin{equation}\label{eqn:degreelatticeseries}
 6 \sum_{d=1} | \mathcal{L}_d (\Lambda_0)| \, q^d  = 6 \sum_{d=1} \mathfrak{D}(d) q^d = 6 f(q).
 \end{equation}
 \end{theorem}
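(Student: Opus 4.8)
The plan is to assemble the statement directly from the three structural results already in hand, so the argument is essentially bookkeeping. First I would recall the correspondence $\Theta_d : \mathcal{M}_{0,4,d}(1) \to \mathcal{L}_d(\Lambda_0)$ from \eqref{eqn:orbsp_lattice}, which is well-defined by Proposition \ref{prop:lifting} and Lemma \ref{lem:degindcomp} (the lifted map $\WT{u}$ is linear and carries $\Lambda$ to a sublattice of index $d$). By Proposition \ref{thm:sublattice_corr} this map is surjective and exactly $6$-to-$1$. Hence, counting fibers,
\begin{equation*}
| \mathcal{M}_{0,4,d}(1) | = 6 \, | \mathcal{L}_d(\Lambda_0) |
\end{equation*}
for every $d \geq 1$. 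Summing against $q^d$ gives the first equality in \eqref{eqn:degreelatticeseries} after multiplying by $6$.

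Next I would invoke the two lemmas at the end of \ref{subsec:clsublat}: Lemma \ref{lem:basisfix} normalizes a $\Z$-basis of any $\Lambda \in \mathcal{L}_d(\Lambda_0)$ to the form $w_1 = (h,0)$, $w_2 = (m, d/h)$ with $h \mid d$, and the subsequent lemma shows there are exactly $h$ distinct sublattices for each divisor $h$ of $d$, so that $|\mathcal{L}_d(\Lambda_0)| = \sum_{h \mid d} h = \mathfrak{D}(d)$. This yields the middle equality $6 \sum_{d \geq 1} |\mathcal{L}_d(\Lambda_0)| q^d = 6 \sum_{d \geq 1} \mathfrak{D}(d) q^d$.

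Finally, the last equality follows from \eqref{eqn:nonconstf}, where it was shown that $\sum_{d \geq 1} \mathfrak{D}(d) q^d$ equals the nonconstant part $\sum_{n=1}^{\infty} n \tfrac{q^n}{1-q^n}$ of $f(q)$; since the series $\sum_{d \geq 1} |\mathcal{M}_{0,4,d}(1)| q^d$ has no constant term by construction (all maps counted have degree $d \geq 1$), it agrees with $6 f(q)$ up to the constant $-\tfrac{1}{4}$, which I would either absorb into the convention that the sum runs over $d \geq 1$ or simply note matches the nonconstant part of $6f(q)$. I do not anticipate a genuine obstacle here; the only point requiring care is the clean statement of \emph{which} series $6f(q)$ refers to — i.e. being explicit that the identity is between $\sum_{d \geq 1}|\mathcal{M}_{0,4,d}(1)| q^d$ and the nonconstant terms of $6f(q)$ — so the $-\tfrac14$ constant term of $6f$ does not cause confusion.
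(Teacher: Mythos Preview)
Your proposal is correct and matches the paper's approach exactly: the theorem is stated there as a direct corollary of Proposition \ref{thm:sublattice_corr} together with the lemma computing $|\mathcal{L}_d(\Lambda_0)| = \mathfrak{D}(d)$, and the paper likewise notes that the identity with $6f(q)$ is meant at the level of nonconstant terms. Your explicit flagging of the $-\tfrac{1}{4}$ constant discrepancy is apt, since the paper's phrasing ``recovers nonconstant terms of $f(q)$'' handles it the same way.
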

We will now refine  this ``combined counting" \eqref{eqn:degreelatticeseries} further in accordance with the images of three orbifold points $y_2,y_3,y_4$ in the domain of holomorphic orbi-sphere (recall $u(y_1)=x_1$ for $[u] \in \mathcal{M}_{0,4,d} (1)$).

\begin{figure}[h]
\begin{center}
\includegraphics[height=1.7in]{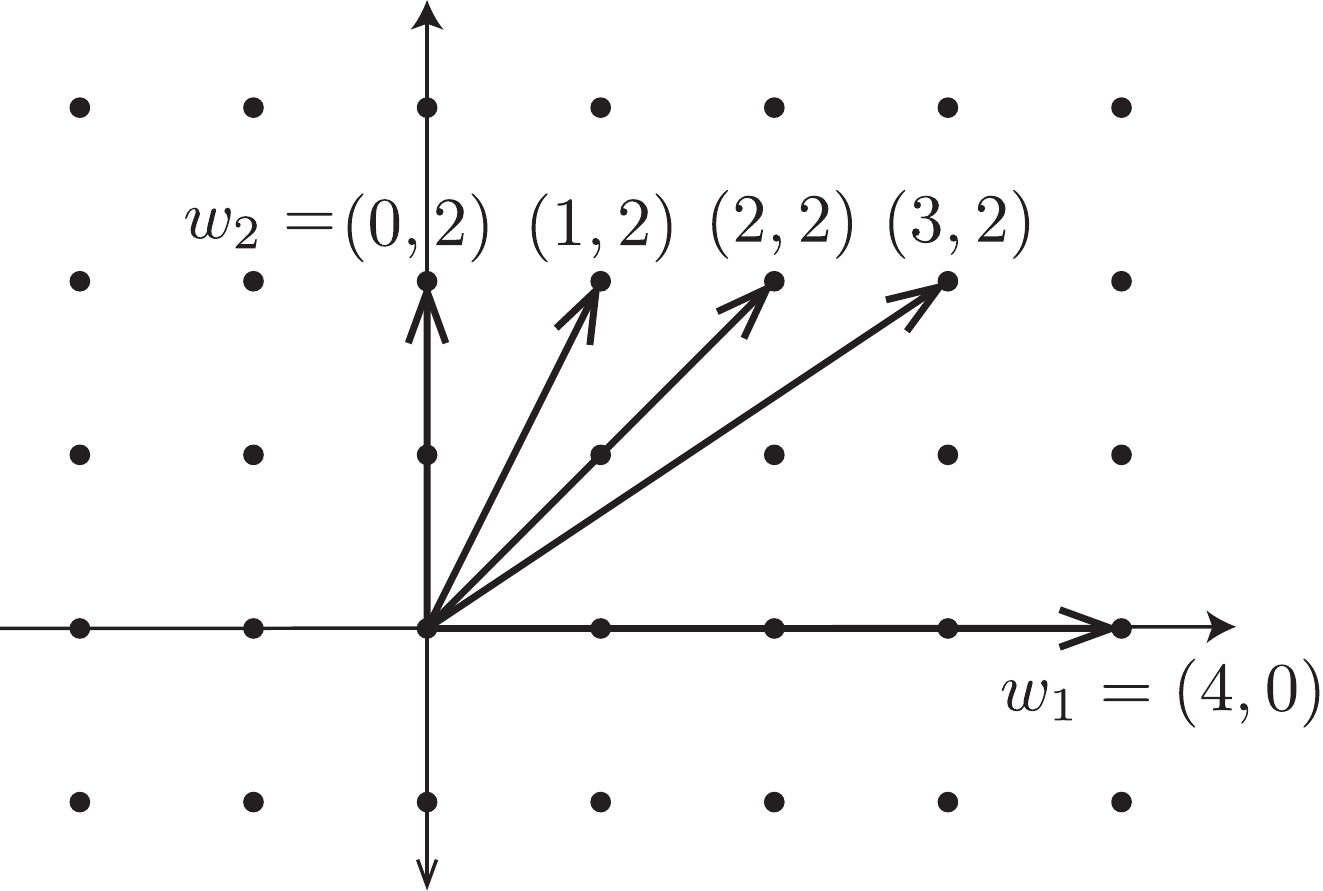}
\caption{$\Lambda_{4,2}^m$ for $m=0,1,2,3$ ($d=4\cdot2=8$)}\label{fig:basisfix}
\end{center}
\end{figure}

For $hg =d$, we define $\Lambda^m_{h,g}$ by the index $d$ sublattice generated by $\{(h,0),(m,g)\}$ for $0 \leq m \leq h-1$ so that
$$\mathcal{L}_d (\Lambda_0) = \{ \Lambda^m_{h,g} \,|\, hg=d, 0 \leq m \leq h-1\}.$$
(From the proof of Lemma \ref{lem:basisfix}, we know that $\Lambda^m_{h,g} = \Lambda^{m+h}_{h,g}$.)
Recall that we have defined the $6$-to-$1$ correspondence
$$\Theta_d : \mathcal{M}_{0,4,d} (1) =\bigcup_{i,j,k} \mathcal{M}_{0,4,d} (\D{1}, \D{j}, \D{k}, \D{l}) \to \mathcal{L}_d (\Lambda_0).$$

We now look into the inverse image $\Theta_d^{-1} (\Lambda^m_{h,g})$.
From the discussion at the end of \ref{subsec:holoorbsublat}, the class in $\Theta_d^{-1} (\Lambda^m_{h,g})$ admits a representative $u: \PB(\Lambda^m_{h,g} ) \to \PB(\Lambda_0)$ whose linear lifting is identity. As in \eqref{eqn:orbyi}, $y_1,y_2,y_3,y_4 \in \PB(\Lambda)$ will denote four orbifold points in the domain of $u$:
$$y_1=0,\quad y_2=\frac{w_1}{2},\quad y_3=\frac{w_1 + w_2}{2}, \quad y_4=\frac{w_2}{2} \mod \Lambda^m_{h,g},$$
but now, with respect to the fixed basis $w_1 = (h,0)$ and $w_2=(m,g)$ of $\Lambda^m_{h,g}$.

\begin{prop}\label{prop:insert_type}
For each sublattice $\Lambda^m_{h,g}$, choose a representative $u$ of $[u] \in \Theta_d^{-1} (\Lambda^m_{h,g})$ whose linear lifting is identity. Then one can determine the orbi-insertions of $[u] \in \Theta_d^{-1} (\Lambda^m_{h,g})$ at $(y_2,y_3,y_4)$ as follows:
($u(y_1)$ is always $x_1$)
\begin{itemize}
\item if $d$ is even,
	\begin{enumerate}
    \item[(i)] $(g, h, m)\equiv (0,0,0) \mod 2\Rightarrow$ $u(y_2,y_3,y_4)=(x_1,x_1,x_1)$;
    \item[(ii)] $(g, h, m)\equiv (0,0,1) \mod 2\Rightarrow$ $u(y_2,y_3,y_4)=(x_1,x_2,x_2)$;
    \item[(iii)] $(g, h, m)\equiv (0,1,0) \mod 2\Rightarrow$ $u(y_2,y_3,y_4)=(x_2,x_2,x_1)$;
    \item[(iv)] $(g, h, m)\equiv (0,1,1) \mod 2\Rightarrow$ $u(y_2,y_3,y_4)=(x_2,x_1,x_2)$;
    \item[(v)] $(g, h, m)\equiv (1,0,0) \mod 2\Rightarrow$ $u(y_2,y_3,y_4)=(x_1,x_4,x_4)$;
    \item[(vi)] $(g, h, m)\equiv (1,0,1) \mod 2\Rightarrow$ $u(y_2,y_3,y_4)=(x_1,x_3,x_3)$;
    \end{enumerate}
\item if $d$ is odd,
	\begin{enumerate}
	\item[(vii)] $(g, h, m)\equiv (1,1,0) \mod 2\Rightarrow$ $u(y_2,y_3,y_4)=(x_2,x_3,x_4)$;
	\item[(viii)] $(g, h, m)\equiv (1,1,1) \mod 2\Rightarrow$ $u(y_2,y_3,y_4)=(x_2,x_4,x_3)$.
	\end{enumerate}
\end{itemize}
\end{prop}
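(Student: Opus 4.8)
The plan is to compute directly, for the representative $u$ whose linear lifting is the identity map $\mathrm{id}\colon \C\to \C$, the image in $\PB(\Lambda_0)$ of each of the three orbi-points $y_2 = w_1/2$, $y_3 = (w_1+w_2)/2$, $y_4 = w_2/2$ (mod $\Lambda^m_{h,g}$), where $w_1 = (h,0)$ and $w_2 = (m,g)$. Since $u$ lifts to the identity on $\C$, the image of $y_j$ is simply the class in $\PB(\Lambda_0) = [\C/\Lambda_0]/\Z_2$ of the corresponding point of $\C$. So the whole proposition reduces to the bookkeeping question: for a half-lattice vector $\tfrac12 w \in \tfrac12\Lambda^m_{h,g}$, which of the four $\Z_2$-fixed cosets of $\tfrac12\Lambda_0$ listed in \eqref{eqn:arrangementxi} does it land in, as a function of the parities of $h$, $g$, $m$? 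Recall $p^{-1}(x_1) = \Lambda_0$, $p^{-1}(x_2) = \tfrac12+\Lambda_0$, $p^{-1}(x_3) = \tfrac{1+\sqrt{-1}}{2}+\Lambda_0$, $p^{-1}(x_4) = \tfrac{\sqrt{-1}}{2}+\Lambda_0$; equivalently, writing $\tfrac12 w = (a/2, b/2)$ with $a,b\in\Z$, the image orbi-point is determined by $(a \bmod 2, b \bmod 2)$, with $(0,0)\leftrightarrow x_1$, $(1,0)\leftrightarrow x_2$, $(1,1)\leftrightarrow x_3$, $(0,1)\leftrightarrow x_4$.

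The key computation is then: $\tfrac12 w_1 = (h/2, 0)$, so the parity pair is $(h \bmod 2, 0)$; $\tfrac12 w_2 = (m/2, g/2)$, so the pair is $(m \bmod 2, g \bmod 2)$; and $\tfrac12(w_1+w_2) = ((h+m)/2, g/2)$, so the pair is $(h+m \bmod 2, g \bmod 2)$. One subtlety to address: the representative $\tfrac12 w$ is only well-defined modulo $\Lambda^m_{h,g}$, not modulo $\Lambda_0$, so in principle a different choice of lift $\tfrac12 w + \lambda$ ($\lambda \in \Lambda^m_{h,g}$) could change the parity pair. But adding $\lambda \in \Lambda^m_{h,g} \subset \Lambda_0$ changes $\tfrac12 w$ by an element of $\Lambda_0$, hence does not change the image point in $\PB(\Lambda_0)$ at all — so the parity pair, viewed as labelling a coset of $\tfrac12\Lambda_0 / \Lambda_0$, is genuinely well-defined. (The $\Z_2$-quotient never intervenes because all four half-lattice points are $\Z_2$-fixed.) I would state this observation once at the outset and then simply tabulate.

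With the formulas above, the proposition becomes a finite case-check over the $2^3 = 8$ parity vectors $(g,h,m) \bmod 2$. First one notes that $\ind(\Lambda^m_{h,g}) = hg = d$ forces: $d$ even iff $h$ or $g$ is even; $d$ odd iff both $h$ and $g$ are odd — which is exactly why cases (i)--(vi) are grouped under "$d$ even" and (vii)--(viii) under "$d$ odd". Then for each of the eight vectors I would read off the three parity pairs for $y_2, y_3, y_4$ and convert to $x_\bullet$. For instance, case (iii), $(g,h,m)\equiv(0,1,0)$: $y_2$ has pair $(h,0)=(1,0)\to x_2$; $y_4$ has pair $(m,g)=(0,0)\to x_1$; $y_3$ has pair $(h+m,g)=(1,0)\to x_2$; giving $(x_2,x_2,x_1)$, as claimed. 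The remaining seven cases are identical in spirit.

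I do not expect a genuine obstacle here; this is essentially a verification lemma. The only place demanding care is the well-definedness remark in the second paragraph — making sure the reader sees that choosing $\tfrac12 w$ as a representative in $\C$ rather than a point of $\PB(\Lambda^m_{h,g})$ is harmless because $u$ is the projected identity, so its value is read off on the universal cover $\C$ and $\Lambda^m_{h,g}$-ambiguity is absorbed into $\Lambda_0$-ambiguity. Everything else is the eight-line table, which I would present compactly (perhaps inline within the cases) rather than belabor. I would also recall explicitly the correspondence between parity pairs $(a\bmod 2, b \bmod 2)$ and $x_1,\dots,x_4$ coming from \eqref{eqn:arrangementxi}, since that is the dictionary that makes the table mechanical.
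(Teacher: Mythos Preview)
Your proposal is correct and follows essentially the same approach as the paper's own proof: the paper also writes out $y_2=(h/2,0)$, $y_3=((h+m)/2,g/2)$, $y_4=(m/2,g/2)$, works one case explicitly (case (i)), and remarks that the remaining cases follow by the identical parity comparison with \eqref{eqn:arrangementxi}. Your version is in fact more careful than the paper's terse argument, in that you explicitly address well-definedness of the representative in $\C$ and explain why the even/odd split on $d$ matches the parity classes of $(g,h)$.
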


\begin{proof}
The proof follows directly from the picture. For example, if $g,h,m$ are all even, all coordinates of $y_2,y_3,y_4$ become integers since
\begin{equation}\label{y234ghm}
y_2 = \left(\frac{h}{2},0 \right),\quad y_3 = \left(\frac{h+m}{2},\frac{g}{2} \right),\quad y_4= \left(\frac{m}{2},\frac{g}{2} \right) \mod \Lambda_{g,h}^m.
\end{equation}
This implies that $u(y_i) = 0 \mod \Lambda_0$ and the case (i) follows as $x_1=0 \mod \Lambda_0$ \eqref{eqn:arrangementxi}. (Here, we used the fact that the linear lifting of $u$ is the identity map.)

The rest of cases can be deduced similarly by comparing \eqref{y234ghm} and \eqref{eqn:arrangementxi}.
\end{proof}

To compute corresponding Gromov-Witten invariants precisely, we need to additionally take into account the fact that six elements in $\Theta_d^{-1} (\Lambda^m_{h,g})$ are assigned to six different configuration of three marked points $(z_2,z_3,z_4)$. Namely, $(y_2,y_3,y_4) = (z_{\tau(2)}, z_{\tau(3)},z_{\tau(4)})$ for a permutation $\tau$ on $\{2,3,4\}$. This will be dealt with in the next section.

\section{Computation of genus 0 Gromov-Witten invariant of $\PB$}\label{Sec:computation}
 In this section, we precisely compute the genus 0 GW potential of $\PB$ using the counting result obtained in Section \ref{sec:mainsec_sphcount}. At the end, we will see that our computation matches the formula of the GW potential given in \cite{ST}. As in the previous section, we fix the complex structure of the target orbifold of the holomorphic orbi-spheres, and write it as $\PB(\Lambda_0)$.

Since all cone points in $\PB (\Lambda_0)$ have $\Z_2$-singularities, the domain orbifold  with only $\Z_2$-singularities can contribute to the Gromov-Witten invariant.
Since the divisor axiom will determine the invariants with $\pt$-insertions from those without $\pt$-insertions (except $\langle \mathbf{1}, \mathbf{1}, \pt \rangle$; see the discussion below), it is enough to consider the correlators containing $\mathbf{1}$ and $\D{i}$'s only. Among these correlators, only the following type can possibly survive (by the dimension formula (see Appendix \ref{app:orbcovth})):
\begin{equation}\label{eqn:possiblenonzerogw}
\langle \D{i}, \D{j}, \D{k}, \D{l}\rangle_{0,4,d}, \quad \langle \mathbf{1} , \D{i}, \D{j} \rangle_{0,3,d}.
\end{equation}

All possible domains for the nonconstant stable maps contributing to \eqref{eqn:possiblenonzerogw} are listed as follows:
\begin{enumerate}
\item[(a)] $\PP^1_{2,2}$ contributing to $\langle \mathbf{1}, \Delta_{\bullet}, \Delta_{\circ} \rangle$,
\item[(b)] $\PP^1_{2,2,2,2}$ contributing to $\langle \Delta_{\bullet}, \Delta_{\circ}, \Delta_{\diamond}, \Delta_{\dagger} \rangle$,
\item[(c)] (nodal domains) suitable connected sums of $\PP^1$,  $\PP^1_{2}$, $ \PP^1_{2,2}$, $\PP^1_{2,2,2}$.
\end{enumerate}

By the same argument as in the proof of Lemma \ref{Lem:degenerate}, one can show that $\langle \mathbf{1}, \mathbf{1}, \pt, \cdots, \pt \rangle$ and $\langle \mathbf{1}, \Delta_{\bullet}, \Delta_{\circ} \rangle$ are contributed by only constant maps.

By the divisor axiom, $\langle \mathbf{1}, \mathbf{1}, \overbrace{\pt, \cdots, \pt}^{n} \rangle$ is zero unless $n=1$. When $n=1$, $\langle \mathbf{1}, \mathbf{1}, \pt\rangle$ is nothing but the structure constant for the Chen-Ruan cup product since it only admits constant map contribution. As $\mathbf{1}$ and $\pt$ are from the trivial twisted sector, the corresponding product is simply the cup product for $S^2$, which is $\mathbf{1} \cup \mathbf{1} = PD(\pt) = \mathbf{1}$. Hence the structure constant is $1$, or equivalently, $\langle \mathbf{1}, \mathbf{1}, \pt\rangle=1$.

Likewise, $\langle \mathbf{1}, \Delta_{\bullet}, \Delta_{\circ} \rangle$ in (a) is the coefficient of $\pt$ for the Chen-Ruan cup product $\Delta_{\bullet} \cup \Delta_{\circ}$, which we will examine in \ref{subsec:compst1}. Nonconstant contributions for (b) in the above list will be computed in \ref{subsec:idorins}.

We first exclude the possibility of contribution from (c). (Recall from \eqref{eqn:tempM} that the counting in the previous section does not include such maps.)

\subsection{Nodal curve contributions}\label{subsec:nodalcontri}
To show that (c) in the above list never happens in our case, we prove that there are no holomorphic maps from each of $\PP^1$,  $\PP^1_{2}$, $ \PP^1_{2,2}$, $\PP^1_{2,2,2}$ to $\PB$.

\begin{lemma}\label{Lem:degenerate}
There is no nodal curves contributing to the genus-0 Gromov-Witten invariants except degree zero maps.
\end{lemma}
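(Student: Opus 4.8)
\textbf{Proof plan for Lemma \ref{Lem:degenerate}.}

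The plan is to reduce the statement to the non-existence of non-constant holomorphic orbi-maps from the low-complexity orbifold spheres $\PP^1$, $\PP^1_2$, $\PP^1_{2,2}$, $\PP^1_{2,2,2}$ to $\PB(\Lambda_0)$. First I would recall that a non-constant stable map in genus $0$ has a domain which is a tree of orbifold spheres, where the orbifold marked points and nodes carry $\Z_2$-isotropy (since the only isotropy in the target is $\Z_2$ and the injectivity condition \eqref{eqn:injpi1} forces orbifold structure at a special point to inject into the corresponding local group of the target). Thus each irreducible component, together with its special points, is one of $\PP^1$, $\PP^1_2$, $\PP^1_{2,2}$, $\PP^1_{2,2,2}$, $\PP^1_{2,2,2,2}$, and a nodal contribution forces at least one component whose underlying coarse space is $S^2$ but with at most three orbifold points. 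So it suffices to show each of these four ``short'' orbifold spheres admits no non-constant holomorphic map to $\PB(\Lambda_0)$.

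For this I would use the same lifting mechanism as in Section \ref{sec:mainsec_sphcount}. A non-constant holomorphic orbi-map $v : \Sigma \to \PB(\Lambda_0)$ from one of these spheres would, after passing to universal orbifold covers, lift to a holomorphic map between the covers. The universal cover of $\PB(\Lambda_0)$ is $\C$, while the universal orbifold cover of $\PP^1$ is $\PP^1$ (which is $S^2$, compact, positively curved), of $\PP^1_2$ is again $\PP^1$ (the orbifold $\PP^1_2$ is bad — its universal cover as a manifold is $\PP^1$ with no orbifold structure, since there is no orbifold cover that kills a single $\Z_2$ point), and of $\PP^1_{2,2}$ is also $\PP^1$ (a global $\Z_2$-quotient realizes $\PP^1_{2,2}$ as $[\PP^1/\Z_2]$, hence its orbifold universal cover is $\PP^1$). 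Only $\PP^1_{2,2,2}$ has a Euclidean or hyperbolic structure — in fact $\PP^1_{2,2,2}$ is hyperbolic, so its universal cover is the disk $\mathbb{D}$. Either way, the lift $\WT{v}$ is a holomorphic map from a compact Riemann surface ($\PP^1$) or from the disk $\mathbb{D}$ into $\C$; in the compact case it is constant by the maximum principle / Liouville-type argument, and in the disk case it is constant because a bounded (indeed, one checks it is equivariant with respect to lattice translations acting cocompactly downstairs, forcing boundedness after quotienting, or more directly because there is no non-constant holomorphic map $\mathbb{D} \to \C$ that is equivariant for the deck actions on both sides, the deck group of $\mathbb{D}$ being non-abelian while that of $\C$ is $\Lambda_0 \rtimes \Z_2$). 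Hence $v$ itself is constant, a contradiction.

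Alternatively — and this is probably the cleanest route, avoiding any delicate discussion of bad orbifolds — I would argue by a degree/Euler-characteristic (Riemann–Hurwitz) count exactly as in the proof of Lemma \ref{lem:RHformula} and Lemma \ref{lem:degindcomp}. A non-constant holomorphic orbi-map to the Calabi–Yau orbifold $\PB(\Lambda_0)$ (which has orbifold Euler characteristic $0$) is necessarily an orbifold covering onto its image, so the orbifold Euler characteristic of the domain must be $\leq 0$; but $\chi^{orb}(\PP^1)=2$, $\chi^{orb}(\PP^1_2)=\tfrac{3}{2}$, $\chi^{orb}(\PP^1_{2,2})=1$, and $\chi^{orb}(\PP^1_{2,2,2})=\tfrac{1}{2}$, all strictly positive, so no such non-constant map exists. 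The main obstacle is making precise that a non-constant holomorphic map into a flat orbifold is an orbifold covering (equivalently, unramified), so that the Riemann–Hurwitz inequality applies; this is exactly the content already established for the $\PP^1_{2,2,2,2}$ case in Lemma \ref{lem:RHformula}, and the same argument — the pullback of the flat metric has non-negative curvature and zero total curvature, hence is flat, hence the map is a local isometry away from marked points — applies verbatim here. Once that is in hand, the Euler-characteristic inequality immediately rules out all four domains, and therefore rules out nodal contributions except from degree-zero (constant) components.
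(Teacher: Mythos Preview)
Your first approach contains a factual error: $\PP^1_{2,2,2}$ has orbifold Euler characteristic $2 - 3\cdot\tfrac{1}{2} = \tfrac{1}{2} > 0$, so it is \emph{spherical}, not hyperbolic; its orbifold universal cover is $\PP^1$ (it is realized as $[\PP^1/(\Z_2\times\Z_2)]$). Once corrected, the disk case disappears entirely and the lifting argument for $\PP^1$, $\PP^1_{2,2}$, $\PP^1_{2,2,2}$ becomes exactly what the paper does: all three are finite quotients of $\PP^1$, so a non-constant holomorphic orbi-map lifts to a non-constant holomorphic map $\PP^1 \to E_0$, impossible since $\pi_2(E_0)=0$. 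Your treatment of $\PP^1_2$ is also muddled: as you yourself note, it is a bad orbifold, hence admits \emph{no} manifold cover whatsoever (its orbifold universal cover is itself, $\pi_1^{orb}(\PP^1_2)$ being trivial), so there is no lift to $\PP^1$ to speak of. The paper handles this case by a separate $\pi_1$-argument: since $\PB$ is good, the local-chart inclusion $\Z_2 \hookrightarrow \pi_1^{orb}(\PB)$ is injective, and together with the orbi-map injectivity condition \eqref{eqn:injpi1} this forces the composite $\Z_2 \to \pi_1^{orb}(\PB)$ to be injective; but it also factors through $\pi_1^{orb}(\PP^1_2)=0$, a contradiction.

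Your second (Riemann--Hurwitz/Euler-characteristic) route is a valid and genuinely different alternative. The orbifold Riemann--Hurwitz inequality $\chi^{orb}(\Sigma) \le d\cdot\chi^{orb}(\PB)=0$ holds for any non-constant holomorphic orbi-map---the ramification bookkeeping in the proof of Lemma~\ref{lem:RHformula} adapts with only cosmetic changes---and excludes all four positive-$\chi^{orb}$ domains uniformly, including the bad $\PP^1_2$. This buys a one-line argument at the cost of redoing that ramification analysis in slightly greater generality; the paper instead trades a short topological lifting for the three good orbifolds against a dedicated fundamental-group obstruction for the bad one.
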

\begin{proof}
We divide possible irreducible components of nodal maps into two cases as follows.

\noindent(i) First, $\PP^1$, $\PP^1_{2,2,2}$ and $\PP^1_{2,2}$ are given as the quotients of $\PP^1$ by finite groups. Since $\PP^1$ is simply connected, any non-constant holomorphic map from these domains to $\PB$ can be lifted to a non-constant holomorphic map from $\PP^1$ to the elliptic curve $E_0$ (recall $\PB(\Lambda_0) = [E_0 / \Z_2]$.
Since homotopy types such maps should be trivial in $\pi_2 (E_0)$, they should be constant maps.

\noindent(ii)
For $\PP^1_2$, we use the fact that the homomorphism $\pi_1^{orb} ([V / G]) \to \pi_1^{orb} (\CX)$ induced from any local chart $[V/G]$ of $\CX$ is injective for a good orbifold $\CX$. We apply this to $\CX = \PB(\Lambda_0)$.
If there is any non-constant morphism $f$ from $\PP^1_2$ to $\PB(\Lambda_0)$,  it should look locally around the singular point of $\PP^1_2$ as follows:
\begin{equation*}
\xymatrix{ \PP^1_2 \ar[r]^{f} & \PB(\Lambda_0) \\
[U / \Z_2] \ar[r]^{f|_U} \ar[u] &  [V / \Z_2] \ar[u] }
\end{equation*}
for some discs $U, V \subset \C$. This induces homomorphisms between corresponding orbifold fundamental groups
\begin{equation*}
\xymatrix{
\pi_1^{orb} (\PP^1_2) \ar[r] & \pi_1^{orb} (\PB(\Lambda_0)) \\
\pi_1^{orb} ([U / \Z_2]) (\cong \Z_2) \ar[r]^{\text{inj}} \ar[u]_0 & \pi_1^{orb} ([V / \Z_2])(\cong \Z_2) \ar[u]^{\text{inj}} }.
\end{equation*}
where the vertical map on the right is injective by the property of a good orbifold above, and the bottom map is injective by \eqref{eqn:injpi1}. However,
the left vertical map is a zero map because $\pi_1^{orb} (\PP^1_2) = 0$, so the above diagram can not be commutative.
\end{proof}

%----------------------------------------------------------

\subsection{Identification of orbi-insertions}\label{subsec:idorins}
We now give a precise computation of the Gromov-Witten invariants for $\PB$ of the type $\langle \Delta_{\bullet}, \Delta_{\circ}, \Delta_{\diamond}, \Delta_{\dagger} \rangle$ with nonconstant contributions. So, the degree $d$ will be a positive integer throughout the section.

By Proposition \ref{thm:sublattice_corr}, we associate six non-equivariant orbifold stable maps for each sublattice $\Lambda$. More concretely, for $\Lambda^m_{g,h} \subset \Lambda_0$ we have six maps $(u,\tau)$ for $\tau \in S_3$ where
$$ u : \PB(\Lambda^m_{g,h}) \to \PB(\Lambda_0)$$
with the identity as the linear lifting and the configuration of the marked points is given as
\begin{equation}\label{eqn:tautautau}
(z_2, z_3, z_4) = (y_{\tau(2)},y_{\tau(3)},y_{\tau(4)}).
\end{equation}

Below, we will use following formal power series to express coefficients of the Gromov-Witten potential of $\PB$:
\begin{align*}
\mathfrak{D}(q) :=& \sum_{d \in \N} \mathfrak{D}(d) q^d\\
\mathfrak{D}^{odd}(q) :=& \sum_{d \in \N \setminus 2\N} \mathfrak{D}(d) q^d\\
\mathfrak{D}^{even}(q) :=& \sum_{d \in 2\N} \mathfrak{D}(d) q^d.
\end{align*}

\begin{prop}\label{prop:nonconstgw}
Counting nonconstant holomorphic orbi-spheres in $\PB$ gives the following genus 0 Gromov-Witten invariants.
\begin{equation*}
\begin{array}{l}
 \displaystyle\sum_{d \geq 1} \langle \D{1}, \D{2}, \D{3}, \D{4} \rangle_{0,4,d} \, q^d = \mathfrak{D}^{odd}(q)\\
\displaystyle\sum_{d \geq 1} \langle \D{i}, \D{i}, \D{i}, \D{i} \rangle_{0,4,d} \, q^d = 6 \mathfrak{D}(q^4) \\
\displaystyle\sum_{d \geq 1}  \langle \D{i}, \D{i}, \D{j}, \D{j} \rangle_{0,4,d} \, q^d =  \frac{2}{3} \left(\mathfrak{D}^{even}(q) - \mathfrak{D}(q^4) \right)
\end{array}
\end{equation*}
(for $i \neq j$ in the last equation).
\end{prop}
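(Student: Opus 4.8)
The strategy is to combine the six-to-one correspondence $\Theta_d$ of Proposition \ref{thm:sublattice_corr} with the parity classification of orbi-insertions in Proposition \ref{prop:insert_type}, then repackage the resulting data as generating functions. First I would partition $\mathcal{L}_d(\Lambda_0)$ according to which of the eight parity classes $(g,h,m) \bmod 2$ a sublattice $\Lambda^m_{h,g}$ lies in. For each sublattice, Proposition \ref{thm:sublattice_corr} supplies exactly six lifts $(u,\tau)$, $\tau \in S_3$, and by Proposition \ref{prop:insert_type} the unordered insertion type $\{u(y_2),u(y_3),u(y_4)\}$ is determined by the parity class (with $u(y_1)=x_1$ always). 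Reordering the three marked points by $\tau$ permutes $(z_2,z_3,z_4)$ through the $y_j$'s, so each of the six lifts contributes to $\langle \D{1},\D{j},\D{k},\D{l}\rangle_{0,4,d}$ with $(\D{j},\D{k},\D{l})$ running over the six orderings of the multiset $\{u(y_2),u(y_3),u(y_4)\}$ — with the usual convention that repeated entries in the multiset collapse some of these orderings. I would do this bookkeeping carefully for each of the eight cases of Proposition \ref{prop:insert_type} and tabulate, for each invariant type, which parity classes feed it and with what multiplicity.

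\textbf{Key steps.} (1) \emph{The $(1,2,3,4)$-correlator.} By cases (vii)-(viii) of Proposition \ref{prop:insert_type}, a sublattice contributes insertion type $\{x_2,x_3,x_4\}$ at $(y_2,y_3,y_4)$ precisely when $d$ is odd (the two odd parity classes $(1,1,0)$ and $(1,1,1)$ together exhaust all odd-index sublattices, since $d=gh$ odd forces $g,h$ both odd). The six lifts $(u,\tau)$ then realize all $6$ orderings of $\{\D{2},\D{3},\D{4}\}$ appended to $\D{1}$, and each ordering occurs exactly once among the six (all three target values distinct, so $S_3$ acts freely on orderings). Hence $\langle \D{1},\D{2},\D{3},\D{4}\rangle_{0,4,d}$ equals the number of odd-index sublattices, $\mathfrak D(d)$ for $d$ odd and $0$ for $d$ even, giving $\mathfrak D^{odd}(q)$. (2) \emph{The $(i,i,i,i)$-correlator.} This needs insertion type $\{x_1,x_1,x_1\}$ at $(y_2,y_3,y_4)$ together with $u(y_1)=x_1$; by case (i) this is exactly the class $(g,h,m)\equiv(0,0,0)$, i.e.\ $\Lambda \subset 2\Lambda_0$, equivalently $\Lambda = 2\Lambda'$ for a unique $\Lambda' \subset \Lambda_0$ of index $d/4$. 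Such sublattices exist only for $4\mid d$ and number $|\mathcal L_{d/4}(\Lambda_0)| = \mathfrak D(d/4)$. Now all six lifts $(u,\tau)$ give the same correlator $\langle \D{1},\D{1},\D{1},\D{1}\rangle$ (all insertions equal), so they contribute $6$ to the count: $\sum_d \langle \D{1},\D{1},\D{1},\D{1}\rangle_{0,4,d}q^d = 6\mathfrak D(q^4)$, and $\langle \D{i},\D{i},\D{i},\D{i}\rangle$ equals $\langle \D{1},\D{1},\D{1},\D{1}\rangle$ by the evident symmetry of $\PB(\Lambda_0)$ permuting the four orbi-points (or, more carefully, by running the same argument with $\D{1}$ replaced by $\D{i}$ via the analogous decomposition $\mathcal M_{0,4,d}(i)$). (3) \emph{The $(i,i,j,j)$-correlator.} The remaining even parity classes (ii)-(vi) produce insertion types of the shape $\{x_a,x_b,x_b\}$ at $(y_2,y_3,y_4)$: together with $u(y_1)=x_1$ these give multisets $\{x_1,x_1,x_2,x_2\}$, $\{x_1,x_2,x_2,x_1\}$ type patterns — concretely $\{x_1,x_1,x_a,x_a\}$ for various $a$. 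For a fixed pair $\{i,j\}$ one collects the parity classes whose four insertions (including the forced $x_1$) form the multiset $\{x_i,x_i,x_j,x_j\}$; by inspecting (ii)-(vi), each even-index sublattice with $(g,h,m)\not\equiv(0,0,0)$ lands in exactly one such pattern. Among the six lifts $(u,\tau)$ of such a sublattice, the orderings of a multiset of type $\{A,A,B,B\}$ (with first entry possibly constrained) yield the correlator $\langle \D{i},\D{i},\D{j},\D{j}\rangle$; counting how many of the six $\tau$'s give each ordered correlator, and summing over the $\binom{4}{2}=6$ pairs $\{i,j\}$ versus the $5$ contributing parity classes, produces the factor $\tfrac{2}{3}$. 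The total even-index count not of type $(0,0,0)$ is $\mathfrak D^{even}(q) - \mathfrak D(q^4)$ (subtracting the $(0,0,0)$-class which was $\mathfrak D(q^4)$), and the symmetrization constant works out to $\tfrac23$, giving the third formula.

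\textbf{Main obstacle.} The genuinely delicate point is the combinatorial bookkeeping in step (3): tracking, for each of the five even parity classes, exactly which ordered 4-tuple $(\D{1},\D{j},\D{k},\D{l})$ each of the six lifts $(u,\tau)$ contributes to, and then verifying that after summing over all unordered pairs $\{i,j\}$ the net multiplicity is the uniform constant $\tfrac23$ rather than something pair-dependent. One must be careful that Assumption \ref{assm:firstmark} pins $z_1=y_1$ but the first \emph{insertion} need not be $\D{1}$ — so the decomposition $\mathcal M_{0,4,d}(1)$ only captures correlators with \emph{some} $\D{1}$ among the four, and one should either invoke the full symmetry of the four orbi-points of $\PB(\Lambda_0)$ to reduce to this case or set up $\mathcal M_{0,4,d}(i)$ for each $i$ and check consistency. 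A secondary subtlety is confirming that the two odd classes $(1,1,0),(1,1,1)$ exhaust \emph{all} odd-index sublattices and the six even classes exhaust all even-index ones, which follows from $d=gh$ and $0\le m\le h-1$ but should be stated. Once the case analysis is organized — ideally in a small table indexed by $(g,h,m)\bmod 2$ with columns "insertion multiset" and "correlators hit, with multiplicity" — the three generating-function identities drop out by matching coefficients of $q^d$ against Theorem \ref{thm:lumpsum}'s total count $6f(q) = 6\mathfrak D(q)$, which serves as a useful consistency check: the three right-hand sides, weighted by the number of ordered tuples each represents, must sum back to $6\mathfrak D(q)$.
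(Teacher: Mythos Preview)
Your steps (1) and (2) are correct and match the paper's argument essentially verbatim: odd $d$ forces $g,h$ both odd, so cases (vii)--(viii) exhaust the odd-index sublattices and each contributes one $\tau$ to the ordered tuple $(\D1,\D2,\D3,\D4)$; and case (i) is exactly $\Lambda\subset 2\Lambda_0$, i.e.\ $\Lambda=2\Lambda'$ with $\ind(\Lambda')=d/4$, with all six $\tau$'s landing on $\langle\D1,\D1,\D1,\D1\rangle$.

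Step (3) is where your proposal diverges from the paper, and the averaging shortcut as you state it has a gap. Your argument produces only the \emph{sum}
\[
\langle\D1,\D1,\D2,\D2\rangle_{0,4,d}+\langle\D1,\D1,\D3,\D3\rangle_{0,4,d}+\langle\D1,\D1,\D4,\D4\rangle_{0,4,d}
= 2\bigl(\mathfrak D(d)-\mathfrak D(d/4)\bigr),
\]
and to extract the individual invariant you need the three summands to be equal. This does \emph{not} follow from the biholomorphism group of $\PB(\Lambda_0)$: that group is generated by the half-lattice translations (acting as the Klein four-group on $\{x_1,\dots,x_4\}$) and $z\mapsto iz$ (fixing $x_1,x_3$ and swapping $x_2,x_4$), so it has two or three orbits on unordered pairs, not one. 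Your ``evident symmetry'' would need to be upgraded to an orbifold symplectomorphism plus deformation invariance of GW invariants, which is plausible but not what you wrote. Relatedly, your remark about ``$\binom{4}{2}=6$ pairs versus $5$ parity classes'' is off: inside $\mathcal M_{0,4,d}(1)$ only the three pairs $\{1,2\},\{1,3\},\{1,4\}$ occur, and the five parity classes (ii)--(vi) feed them \emph{unevenly} --- three classes feed $\{1,2\}$, one each feeds $\{1,3\}$ and $\{1,4\}$ --- and the sublattice counts in those classes are genuinely different, so the equality of the three invariants is not a formal symmetry but an honest computation.

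The paper sidesteps this by computing $\langle\D1,\D1,\D4,\D4\rangle_{0,4,d}$ directly: only parity class $(g,h,m)\equiv(1,0,0)$ contributes, with two admissible $\tau$'s; writing $d=2^N d'$ ($d'$ odd), one finds $g$ odd forces $h=2^N h''$ with $h''\mid d'$ and $m$ even gives $2^{N-1}h''$ choices, so the invariant is $2\cdot 2^{N-1}\mathfrak D(d')=2^N\mathfrak D(d')$, which is then rewritten as $\tfrac{2}{3}(\mathfrak D(d)-\mathfrak D(d/4))$ via the identity $\mathfrak D(2^N d')=(2^{N+1}-1)\mathfrak D(d')$. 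The cases $j=2,3$ are declared ``similar'' (and indeed the analogous counts over classes (ii)--(iv) for $j=2$ and class (vi) for $j=3$ give the same $2^N\mathfrak D(d')$). Your proposed table would reproduce exactly this; the point is that the table \emph{is} the computation, not a bookkeeping device that can be replaced by symmetry.
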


\begin{proof}
\noindent (i) $\langle \D{1}, \D{2}, \D{3}, \D{4} \rangle_{0,4,d}$:
Since the image of $\{y_1,y_2,y_3,y_4\}$ under $u$ should be $\{x_1,x_2,x_3,x_4\}$, we need to consider the maps $u$ with $\Theta_d ([u]) = \Lambda_{g,h}^m$ for
\begin{itemize}
\item
$(g,h,m) \equiv (1,1,0) \mod 2$ ($\stackrel{\rm Prop. \ref{prop:insert_type}}{\Longrightarrow} u(y_2,y_3,y_4) = (x_2,x_3,x_4)$), or
\item$(g,h,m) \equiv (1,1,1)  \mod 2$ ($\stackrel{\rm Prop. \ref{prop:insert_type}}{\Longrightarrow} u(y_2,y_3,y_4)  =(x_2, x_4,x_3)$).
\end{itemize}
In particular, $d=gh$ is odd in his case.

For $\langle \D{1}, \D{2}, \D{3}, \D{4} \rangle_{0,4,d}$, we have to count the map whose marked points $z_2,z_3,z_4$ map precisely to $x_2,x_3,x_4$ respectively. Hence, for $(g,h,m) \equiv (1,1,0)$, we should choose $\tau$ in \eqref{eqn:tautautau} to be identity so that $(y_2, y_3, y_4) = (z_2,z_3,z_4)$. Similarly, $\tau$ should be the transition $(3,4)$ for $(g,h,m) \equiv (1,1,1)$.

Consequently, for any odd  $d$ together with the factorization $d=gh$, we have $h$ (equivalnce class of) holomorphic orbi-spheres contributing to $ \langle \D{1}, \D{2}, \D{3}, \D{4}\rangle_{0,4,d}$ (as $m$ ranges over $0,1, \cdots, h-1$), i.e.
\begin{equation*}
 \langle \D{1}, \D{2}, \D{3}, \D{4}\rangle_{0,4,d} = \left\{
\begin{array}{ll}
\sum_{ h | d} h =\mathfrak{D} (d) & d \equiv 1 \mod 2 \\
0 & d \equiv 0 \mod 2
\end{array}\right.
\end{equation*}
and hence $\sum_{d \geq 1} \langle \D{1}, \D{2}, \D{3}, \D{4} \rangle_{0,4,d} \,q^d = \mathfrak{D}^{odd}(q)$.

\vspace{0.5cm}
\noindent (ii) $\langle \D{1}, \D{1}, \D{1}, \D{1} \rangle_{0,4,d}$: In this case, $u$ sends all orbi-points $y_1,y_2,y_3,y_4$ to $x_1$, and hence $\Theta_d ([u]) = \Lambda_{g,h}^m$ with
\begin{itemize}
\item
$(g,h,m) \equiv (0,0,0) \mod 2$ ($\stackrel{\rm Prop. \ref{prop:insert_type}}{\Longrightarrow} u(y_2,y_3,y_4) = (x_1,x_1,x_1)$)\end{itemize}
Moreover, $\tau$ in \eqref{eqn:tautautau} can be arbitrary.
Observe that $d=gh$ is a multiple of $4$. We set $d= 4d'$, $h=2h'$, $g=2g'$.

For any factorization $4d'=2g' \cdot 2h'$, we have $3! \times h'$ (equivalence classes of) holomorphic orbi-spheres contributing to $ \langle \D{1}, \D{1}, \D{1}, \D{1}\rangle_{0,4,d}$ as $\tau$ can be arbitrary elements in $S_3$ and $m$ ranges over $0,2,4, \cdots, 2h'-2$ (so there are $h'$ possiblities), i.e.
\begin{equation*}
 \langle \D{1}, \D{1}, \D{1}, \D{1}\rangle_{0,4,d} = \left\{
\begin{array}{ll}
\sum_{ h' | d'} 6h' = 6\mathfrak{D}(d') & d \equiv 0 \mod 4 \\
0 &  otherwise
\end{array}\right.
\end{equation*}
and hence
$$\sum_{d \geq 1} \langle \D{1}, \D{1}, \D{1}, \D{1} \rangle_{0,4,d} \,q^d = \sum_{d'} 6 \mathfrak{D}(d') q^{4d'} = 6 \mathfrak{D} (q^4).$$

The obvious symmetry on $\PB$ tells us that $\sum_{d \geq 1} \langle \D{i}, \D{i}, \D{i}, \D{i} \rangle_{0,4,d} \,q^d $ for $i=2,3,4$ admit the same expression as above.
\vspace{0.5cm}

\noindent (iii) $\langle \D{1}, \D{1}, \D{j}, \D{j} \rangle_{0,4,d}$ for $j = 2,3,4$:
Let us first consider the case of $\langle \D{1}, \D{1}, \D{4}, \D{4} \rangle$.
In this case, $\Lambda_{g,h}^m$ associated with $u$ should be of the type
\begin{itemize}
\item
$(g,h,m) \equiv (1,0,0) \mod 2$ ($\stackrel{\rm Prop. \ref{prop:insert_type}}{\Longrightarrow} u(y_2,y_3,y_4) = (x_1,x_4,x_4)$)\end{itemize}
In addition, we have to count the map whose marked points $z_2,z_3,z_4$ map to $(x_1,x_4,x_4)$ for $\langle \D{1}, \D{1}, \D{1}, \D{1} \rangle_{0,4,d}$. Hence, $\tau$ in \eqref{eqn:tautautau} should be either the identity or the transition $(3,4)$.

Note that $d=gh$ is even in this case. If we set $d= 2^{N} d'$ with $d'$ odd, then $h$ should be a multiple of $2^N$ so that $g$ becomes odd. Thus $h$ can be $2^N d''$ for any $d''|d'$, and after fixing $h=2^N d''$, the number of choices for $m$  is $2^N d'' / 2 = 2^{N-1} d''$ (since $m$ should be even). Therefore
\begin{equation}\label{eqn:1144middle}
\langle \D{1}, \D{1}, \D{4}, \D{4} \rangle_{0,4,d}= 2 \sum_{d'' | d'} 2^{N-1} d'' = 2^N \mathfrak{D} ( d')
\end{equation}
where $2$ in the front is due to two possible choices for $\tau$.

We next express \eqref{eqn:1144middle} only in terms of the degree $d$. Observe that
\begin{equation}\label{eqn:1144midaft}
\mathfrak{D} (d) = (1+ 2 + 2^2 +  \cdots + 2^N )\mathfrak{D} (d') = (2^{N+1} - 1) \mathfrak{D} (d')
\end{equation}
Hence, if $d \equiv 2 \mod 4$, then $N$ should be $1$ and $\mathfrak{D}(d) = 3 \mathfrak{D} (d')$ so that
$$\langle \D{1}, \D{1}, \D{4}, \D{4} \rangle_{0,4,d} =2  \mathfrak{D} ( d') = \frac{2}{3} \mathfrak{D} (d).$$
If $d \equiv 0 \mod 4$ (i.e. $N>1$), we use the identity $ \mathfrak{D} (d/4) = (2^{N-1} - 1) \mathfrak{D} (d')$. Combining with \eqref{eqn:1144midaft}, we get
$$\mathfrak{D} (d) - \mathfrak{D} (d/4 ) = 2^{N-1} 3 \mathfrak{D} (d').$$
Finally, if $d$ is a multiple of $4$, we have
$$\langle \D{1}, \D{1}, \D{4}, \D{4} \rangle_{0,4,d} =2^N \mathfrak{D} ( d') = \frac{2}{3} ( \mathfrak{D} (d) - \mathfrak{D}(d/4) ).$$

In summary,
\begin{equation*}
 \langle \D{1}, \D{1}, \D{4}, \D{4}\rangle_{0,4,d} = \left\{
\begin{array}{cl}
 \frac{2}{3} ( \mathfrak{D} (d) - \mathfrak{D}(d/4) ) & d \equiv 0 \mod 4 \\
\frac{2}{3} \mathfrak{D} (d) & d \equiv 2 \mod 4 \\
0 & d \equiv 1,3 \mod 4
\end{array}\right.
\end{equation*}
which implies
$$\sum_{d \geq 1} \langle \D{1}, \D{1}, \D{4}, \D{4} \rangle_{0,4,d} q^d =  \frac{2}{3} \left(\mathfrak{D}^{even}(q) - \mathfrak{D}(q^4) \right).$$

One can deduce
$$\sum_{d \geq 1} \langle \D{i}, \D{i}, \D{j}, \D{j} \rangle_{0,4,d} q^d =  \frac{2}{3} \left(\mathfrak{D}^{even}(q) - \mathfrak{D}(q^4) \right)$$
similarly for $i=1$ and $j=2,3$, and the identity holds for more general pair $(i,j)$ (with $i\neq j$) by the symmetry of $\PB$.
\end{proof}

\subsection{Comparison with the formula in \cite{ST}}\label{subsec:compst1}
Proposition \ref{prop:nonconstgw} completely determines the Gromov-Witten invariant of $\PB$ with nonconstant holomorphic orbi-sphere contributions. Before we deduce the GW potential from our counting, we briefly discuss the $(d=0)$-component of the Gromov-Witten invariant.

By degree reason (see \eqref{eqn:possiblenonzerogw}), we only have the following two possibilities
$$\langle 1, \D{j}, \D{j} \rangle_{0,3,0}, \quad \langle \D{j}, \D{j}, \D{j}, \D{j} \rangle_{0,4,0}.$$
The first term is nothing but structure constant of Chen-Ruan cup product, and is  given by
$$\langle 1, \D{j}, \D{j} \rangle_{0,3,0} =  \D{j} \cup \D{j} = \int_{[\{w_j\} / \Z_2]}^{orb} \D{j} \cup \iota^* \D{j} = \frac{1}{2}$$
where $\iota$ is the involution map on inertia orbifold $I\PB$, which is the identity map in our case.

There are also nontrivial contributions from constant maps to $\langle \D{j}, \D{j}, \D{j}, \D{j} \rangle_{0,4,0}$ for $1 \leq j \leq 4$. Indeed, these constant maps are obstructed, and hence, we need to analyze the obstruction bundles on the corresponding moduli spaces, which we will not go into details about (as it is a little far from the main theme of the paper). See Remark \ref{rmk:appconst1} for related discussion.

Combining our counting made in Proposition \ref{prop:insert_type} with these constant contributions, we obtain the genus 0 Gromov-Witten potential as follows:
\begin{equation*}\label{eqn:SHcomPB}
\begin{array}{rl}
F = & \frac{3}{3!} t_0^2 \log q + \frac{1}{3!}  \frac{3}{2} t_0 ( t_1^2 + t_2^2 + t_3^2 + t_4^2 ) \\ &+ \frac{1}{4!} 4!( t_1 t_2 t_3 t_4 ) \cdot \mathfrak{D}^{odd}(q)  + \frac{1}{4!} ( t_1^4 + t_2^4 + t_3^4 + t_4^4 ) \cdot  \left(6 \mathfrak{D} (q^4) -\frac{1}{4} \right)\\ &+ \frac{1}{4!} \frac{4!}{2! 2!} ( t_1^2 t_2^2 + t_1^2 t_3^2 + t_1^2 t_4^2 + t_2^2 t_3^2 + t_2^2 t_4^2 + t_3^2 t_4^2 ) \cdot   \frac{2}{3} \left(\mathfrak{D}^{even}(q) - \mathfrak{D}(q^4) \right) \\
=& \frac{1}{2} t_0^2 \log q + \frac{1}{4} t_0 ( t_1^2 + t_2^2 + t_3^2 + t_4^2 ) \\ &+ ( t_1 t_2 t_3 t_4 ) \cdot \mathfrak{D}^{odd}(q)  + \frac{1}{4} ( t_1^4 + t_2^4 + t_3^4 + t_4^4 ) \cdot  \left( \mathfrak{D} (q^4) -\frac{1}{24} \right)\\ &+ \frac{1}{6} ( t_1^2 t_2^2 + t_1^2 t_3^2 + t_1^2 t_4^2 + t_2^2 t_3^2 + t_2^2 t_4^2 + t_3^2 t_4^2 ) \cdot  \left(\mathfrak{D}^{even}(q) - \mathfrak{D}(q^4) \right).
\end{array}
\end{equation*}
Using the identity \eqref{eqn:nonconstf}, one can easily see that this agrees with the earlier computation of Satake-Takahashi \eqref{eqn:STcomPB}.

%----------------------------------------------------------
\appendix
\section{Regularity}\label{app:trans}
Here, we prove that all holomorphic orbi-spheres in $\WT{\mathcal{M}}_{0,4,d} (\D{i}, \D{j},\D{k},\D{l})$ (Definition \ref{def:Mifjkl}) have no obstructions, or equivalently, ${\rm \mathbb{E}xt}^2(u^*\Omega_{\CX}^1 \to \Omega_\mathcal{C}^1(\sum z_i), \CO_{\mathcal{C}})$ vanish for all $u \in \WT{\mathcal{M}}_{0,4,d} (\D{i}, \D{j},\D{k},\D{l})$.
This will justify that our direct counting of orbifold stable maps in Sections \ref{sec:mainsec_sphcount} and \ref{Sec:computation} genuinely computes Gromov-Witten invariants. In other words, this together with Lemma \eqref{Lem:degenerate} will prove Proposition \ref{prop:regnodeg}.

For a K\"ahler orbifold  $\CX$, consider the following diagram:
\begin{equation*}
\xymatrix{
\mathcal{U} \ar[r]^f \ar[d]_{\pi} & \CX\\
\OL\CM_{0,4,d}(\CX, \vec{\mathbf{x}})
}
\end{equation*}
where $\mathcal{U}$ is the universal family over $\OL\CM_{0,4,d}(\CX, \vec{\mathbf{x}})$ for a non-trivial element $d \in H_2 (\CX, \Q)$ and a quadruple $\vec{\mathbf{x}}$ of twisted sectors $[ \{ w_j \} / \Z_2]$.

From the tangent-obstruction exact sequence of sheaves on $\OL\CM_{0,4,d}(\CX, \vec{\mathbf{x}})$, we have the following exact sequence of vector spaces :
\begin{equation}\label{eq:tan_obs}
\begin{array}{rl}
0 & \to {\rm \mathbb{H}om}(f^*\Omega_{\CX}^1 \to \Omega_\mathcal{C}^1(\sum z_i), \CO_{\mathcal{C}}) \to \Hom\left(\Omega_{\mathcal{C}}^1(\sum z_i),\CO_{\mathcal{C}}\right)\\
\to H^0 (\mathcal{C}, f^*T\CX) &\to {\rm \mathbb{E}xt}^1(f^*\Omega_{\CX}^1 \to \Omega_\mathcal{C}^1(\sum z_i), \CO_{\mathcal{C}}) \to {\rm Ext}^1\left(\Omega_{\mathcal{C}}^1(\sum z_i),\CO_{\mathcal{C}}\right)\\
\to H^1 (\mathcal{C}, f^*T\CX) &\to {\rm \mathbb{E}xt}^2(f^*\Omega_{\CX}^1 \to \Omega_\mathcal{C}^1(\sum z_i), \CO_{\mathcal{C}}) \to 0.
\end{array}
\end{equation}
Here, the last part of the long exact sequence is zero because ${\rm Ext}^2\left(\Omega_{\mathcal{C}}^1(\sum z_i),\CO_{\mathcal{C}}\right) = 0$ for $\mathcal{C}$ a curve.
The first column of \eqref{eq:tan_obs} is the space of infinitesimal deformation and obstruction to deforming of $f$, and the third column is the space of infinitesimal automorphism and deformation of the domain $(\mathcal{C}, \vec{z})$.

In our case, $\CX$ is $\PB$ and $\mathcal{C}$ is $[ E_{\tau} / \Z_2 ]$ for some $\tau \in \mathbb{H}$.
Note that $f^* T\CX \cong T\mathcal{C}$ since $f : \mathcal{C} \to \CX$ is an orbifold covering map.
From the formula for the first Chern number of desingularized bundles \cite[Prop. 4.2.1]{CR1}, $c_1 (|f^* T\CX|) = c_1 (f^* T\CX) - 4 \cdot\frac{1}{2} = -2$. The desingularized bundle $|f^*T\CX|$ is a complex vector bundle over $|\PB| \cong \PP^1$, hence $|f^* T\CX| \cong \CO_{\PP^1}(-2)$.
Since $f^*T\CX$ and $|f^*T\CX|$ have the same local holomorphic sections, $H^0 (\mathcal{C}, f^*T\CX ) = 0$ and $H^1 (\mathcal{C}, f^*T\CX) \cong \C$.

Secondly, note that ${\rm \mathbb{H}om}(f^*\Omega_{\CX}^1 \to \Omega_\mathcal{C}^1(\sum z_i), \CO_{\mathcal{C}})$ is the infinitesimal automorphism group of the orbifold stable maps, which is zero from by the definition of stability.
Also, ${\rm \mathbb{E}xt}^1(f^*\Omega_{\CX}^1 \to \Omega_\mathcal{C}^1, \CO_{\mathcal{C}})$ is the Zariski tangent space of the moduli space $\OL\CM_{0,4,d}(\CX, \vec{\mathbf{x}})$.
Since this moduli space is zero dimensional for positive $d$ (by our direct counting in Section \ref{sec:mainsec_sphcount}), its tangent space ${\rm \mathbb{E}xt}^1(f^*\Omega_{\CX}^1 \to \Omega_\mathcal{C}^1(\sum z_i), \CO_{\mathcal{C}})$ is zero.

Above long exact sequence can be rewritten as
\begin{equation*}\label{eqn:lesreg}
\begin{array}{rcl}
0 \to & 0 &\to \Hom\left(\Omega_{\mathcal{C}}^1(\sum z_i),\CO_{\mathcal{C}}\right)\\
\to 0 \to & 0 &\to {\rm Ext}^1\left(\Omega_{\mathcal{C}}^1(\sum z_i),\CO_{\mathcal{C}}\right)\\
\stackrel{\delta}{\to} \C \to & {\rm \mathbb{E}xt}^2(f^*\Omega_{\CX}^1 \to \Omega_\mathcal{C}^1(\sum z_i), \CO_{\mathcal{C}}) &\to 0
\end{array}
\end{equation*}

To prove the regularity of our maps, we claim that ${\rm \mathbb{E}xt}^2(f^*\Omega_{\CX}^1 \to \Omega_\mathcal{C}^1(\sum z_i), \CO_{\mathcal{C}}) = 0$.
From Riemann-Roch formula,
$\dim{\rm Ext}^1\left(\Omega_{\mathcal{C}}^1(\sum z_i),\CO_{\mathcal{C}}\right) - \dim\Hom\left(\Omega_{\mathcal{C}}^1(\sum z_i),\CO_{\mathcal{C}}\right) = 3g-3+4 = 1$.
Hence, the injective map $\delta : {\rm Ext}^1\left(\Omega_{\mathcal{C}}^1(\sum z_i),\CO_{\mathcal{C}}\right) = \C \to \C$ should be surjective too, and this proves the claim.

\begin{remark}[Constant map]\label{rmk:appconst1}
The above argument can not apply to a constant map contributing to $\langle \Delta_i,\Delta_i,\Delta_i,\Delta_i\rangle_{0,4,d=0}$. The problem is that $\dim \OL\CM_{0,4,d=0}(\CX, \vec{\mathbf{x}})$ is no longer zero dimensional. (There is at least one dimensional parameter on this moduli from the domain complex structures.)
Hence, the moduli has a bigger dimension than the expected one, or equivalently, these constant maps have nontrivial obstructions, 
and this explains why they give rise to a negative rational number in the Gromov-Witten potential.
\end{remark}

\section{Orbifold covering theory}\label{app:orbcovth}
An orbifold $\CX$ is a Hausdorff space $|\CX|$ which locally looks as follows:
\begin{definition}
A local uniformizing chart $(\WT{V}, G, \phi)$ over an open set $V \subset |\CX|$ is a triple of following data :
\begin{enumerate}
	\item A finite group $G$ acts homeomorphic on an open set $\WT{V} \subset \R^n$.
	\item $\phi : \WT{V} \to V$ is a continuous map which factors through its orbit space $\WT{V}/G$ and induces a homeomorphism $\OL\phi : \WT{V}/G \to V$.
\end{enumerate}
\end{definition}

Similarly to the definition of manifold, these local charts are glued together in a compatible way to form an orbifold. A morphism between two orbifolds $\CX$ and $\CY$ is locally a $(G,G')$-equivariant map $(\WT{V}, G) \to (\WT{V}', G')$, where $(\WT{V}, G)$ and $(\WT{V}', G')$ are uniformizing charts for $\CX$ and $\CY$ respectively. (Here, a group homomorphism $G \to G'$ is contained in the data of the morphism.)  We refer readers to \cite{ALR} for more details on orbifolds.

We briefly recall the notion of orbifold covering maps following \cite{T}.
\begin{definition}
For two orbifolds $\CX$ and $\CY$, an orbifold morphism $p : \CX \to \CY$ is an orbifold covering map if it satisfies the following conditions:
\begin{enumerate}
	\item the continuous map $|p| : |\CX| \to |\CY|$ induced from $p$ is a surjective map;
	\item for each $y \in |\CY|$, there is a local uniformizing chart $(\WT{V}_y, G_y, \phi_y)$ of $y$ such that each point $x \in |p|^{-1}(y)$ has a local uniformizing chart $(\WT{V}_x, G_{y,x}, \psi_x)$ for some $G_{y,x} \leq G_y$ such that the following diagram commutes
	\begin{equation*}
	\xymatrix{
			{} 		& \WT{V}_y / G_{y,x} 	\ar[rr]^{\OL\psi_x : \cong}  \ar[d]^{q} 	&& U_{y,x} \ar[d]^{|p|}\\
		\WT{V}_y \ar[r] \ar[ru]	& \WT{V}_y / G_y	\ar[rr]^{\OL\phi_y : \cong}	&& V_y
	}
	\end{equation*}
where $q$ is a natural quotient map.
\end{enumerate}
\end{definition}

We have a lifting theorem for orbifold covering map which is similar to the ordinary one.
\begin{prop}\cite[Proposition 2.7]{T}
Let $f : ({\CX}, x) \to ({\CY}, y)$ be an orbi-map and $p : ({\CY}', y') \to ({\CY}, y)$ be an orbifold covering map. Then $f$ can be lifted to an orbi-map $\WT{f} : {\CX} \to {\CY}'$ if and only if $f_* \pi^{orb}_1 (\CX, x) \subset p_* \pi^{orb}_1 ({\CY}', y')$.
\end{prop}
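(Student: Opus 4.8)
The plan is to prove the proposition by reproducing, in the orbifold category, the classical lifting theorem for covering spaces. The ``only if'' direction is immediate: if $\WT{f}\colon\CX\to\CY'$ is an orbi-map with $p\circ\WT{f}=f$ and $\WT{f}(x)=y'$, then functoriality of $\pi_1^{orb}$ gives $f_*=p_*\circ\WT{f}_*$, so $f_*\pi_1^{orb}(\CX,x)=p_*(\WT{f}_*\pi_1^{orb}(\CX,x))\subset p_*\pi_1^{orb}(\CY',y')$. The substance is the converse. For it I would first reduce to the case that $\CX$ is connected (replace $\CX$ by the component of $x$), and then set up the two tools that make orbifold covering theory run: unique lifting of orbifold paths and of orbifold homotopies through an orbifold covering map. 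With the conventions for orbifold paths and homotopies as in Thurston's notes, both are proved exactly as in the manifold case, locally chart by chart.

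Granting these tools, I would construct the underlying continuous map $|\WT{f}|\colon|\CX|\to|\CY'|$ by monodromy: for $x'\in|\CX|$ pick an orbifold path $\gamma$ from $x$ to $x'$, push it forward to the orbifold path $f_*\gamma$ from $y$ to $f(x')$, lift $f_*\gamma$ through $p$ to the unique orbifold path in $\CY'$ starting at $y'$, and set $|\WT{f}|(x')$ to be its endpoint. The only nontrivial point is well-definedness, and this is exactly where the hypothesis is used: if $\gamma,\gamma'$ both run from $x$ to $x'$ then $\gamma*\OL{\gamma'}$ is an orbifold loop at $x$, whence $f_*[\gamma*\OL{\gamma'}]\in f_*\pi_1^{orb}(\CX,x)\subset p_*\pi_1^{orb}(\CY',y')$; by the loop case of the lifting criterion the $p$-lift of $f_*(\gamma*\OL{\gamma'})$ is then a loop, so the $p$-lifts of $f_*\gamma$ and $f_*\gamma'$ share an endpoint. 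Continuity of $|\WT{f}|$ and the relation $|p|\circ|\WT{f}|=|f|$ are then local assertions: on a small chart around $x'$ any two connecting orbifold paths differ by a loop supported in that chart, which pins $|\WT{f}|$ down to a single continuous branch there.

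Next I would upgrade $|\WT{f}|$ to a genuine orbi-map. Fix $x'$ with isotropy $G_{x'}$ and set $\WT{x'}:=|\WT{f}|(x')$; the orbi-map $f$ supplies, on a uniformizing chart, an equivariant local lift $\WT{V}_{x'}\to\WT{V}_{f(x')}$ together with a homomorphism $\rho\colon G_{x'}\to G_{f(x')}$, while the covering $p$ gives $\WT{x'}$ a chart of the form $(\WT{V}_{f(x')},H)$ with $H\leq G_{f(x')}$ embedded in $G_{f(x')}$. The key local fact is $\rho(G_{x'})\subset H$: an element of $G_{x'}$ corresponds to a small orbifold loop at $x'$, whose $f$-image must, by the construction of $|\WT{f}|$, lift to a loop at $\WT{x'}$, forcing its class into $H\cong\pi_1^{orb}$ of the chart at $\WT{x'}$. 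Composing the local lift of $f$ with this inclusion then gives a chart map into the $H$-chart together with the homomorphism $G_{x'}\xrightarrow{\rho}H$; the compatibility (cocycle) conditions over chart overlaps hold automatically because every choice was pinned down sheet by sheet by the path construction. This yields the desired $\WT{f}$ with $p\circ\WT{f}=f$ and $\WT{f}(x)=y'$, and uniqueness follows once more from unique path lifting.

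The step I expect to be the main obstacle is the bookkeeping that connects the global and local pictures: one must have fixed a workable definition of $\pi_1^{orb}$ via orbifold paths and homotopies, so that path- and homotopy-lifting along orbifold covering maps are genuinely available, and then one must translate the global containment $f_*\pi_1^{orb}(\CX,x)\subset p_*\pi_1^{orb}(\CY',y')$ into the local containment $\rho(G_{x'})\subset H$ of isotropy images needed to construct the chart-level homomorphisms. Once this dictionary is in place, the rest is the routine orbifold transcription of the manifold proof, and no serious new difficulty arises.
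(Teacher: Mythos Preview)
The paper does not actually prove this proposition; it is quoted verbatim from \cite[Proposition 2.7]{T} and no argument is given. Your sketch is the standard transcription of the classical lifting theorem to the orbifold setting and is correct in outline; there is nothing in the paper to compare it against.
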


There is one subtle point when applying this lifting theorem to holomorphic orbi-spheres in our example. While the map between orbifolds in the proposition are ``orbi-maps" \cite[Section 2]{T}, orbifold Gromov-Witten theory usually deals with ``good maps". However, for morphisms between 2-dimensional orbifolds, we have shown in \cite[Lemma 2.10]{HS}  that these two notions coincide.

\bigskip
Let $u : \mathcal{C} \to \CX$ be an orbifold stable map where $\mathcal{C}$ is an orbifold Riemann surface.
For each marked point $z_i$ in $\mathcal{C}$, choose a local uniformizing chart $(\WT{V}, G_{u(z_i)}, \phi)$ of $\CX$ near $u(z_i)$.
Then we take a local uniformizing chart $(\WT{U}_{z_i}, \Z_{m_i}, br_{m_i})$ near $z_i$ such that $u$ maps $U_{z_i}$ into $V$. Here, $\WT{U}_{z_i}$ is a holomorphic disc in a complex plane, $\Z_{m_i} \cong \left\langle \zeta_{m_i} := \exp\left(\frac{2\pi\sqrt{-1}}{m_i} \right) \right\rangle$ acts on $\WT{U}_{z_i}$ via left multiplication, and $br_{m_i} : z \mapsto z^{m_i}$ is a branched map of order ${m_i}$.
By the definition of orbifold morphism, there is a group homomorphism $u_\# : \Z_{m_i} \to G_{u(z_i)}$ and a $u_{\#}$-equivariant lifting $\WT{u}$ of $u$ near $z_i$ such that the following diagram commutes:
\begin{equation*}
\xymatrix{
\WT{U}_{z_i} \ar[r]^{\WT{u}} \ar[d]_{br_{m_i}} & \WT{V} \ar[d]^{\phi}	\\
U_{z_i} \ar[r]_u & V
}.
\end{equation*}
We denote $u_\# (\zeta_{m_i}) \in G_{u(z_i)}$ by $g_i$.
Note that the local isotropy group $G_{u(z_i)}$ is well-defined up to conjugacy.
For each conjugacy class $(g_i)$, we associate a ration number $\iota_{(g_i)}$ which is defined as follows.
Consider a linearized action of $g_i$ on the tangent space $T_{u(z_i)}\WT{V}$.
Choosing a metric which is invariant under the action of local isotropy group, the linearized action can be written as a diagonal matrix of form
\begin{equation*}
	{\rm diag} \left(\exp\left(\frac{2\pi\sqrt{-1}m_{i,1}}{m_i}\right), \cdots, \exp\left(\frac{2\pi\sqrt{-1}m_{i,n}}{m_i}\right)\right)
\end{equation*}
for some $m_{i,j} \in \{0,1,\cdots,m_i -1\}$.
We define the {\it degree shifting number} (or {\it age}) associated with the conjugacy class $(g_i)$ as
$$\iota_{(g_i)} := \sum_{j = 1}^n \frac{m_{i,j}}{m_i}.$$
Because the above diagonal matrix is invariant under conjugate action of local isotropy groups, the degree shifting number is well-defined.

Consider the moduli space $\OL\CM_{g,n,\beta}(\CX)$ of orbifold stable maps from genus-0 orbifold Riemann surface (with nodal singularities) with $n$-marked points to $\CX$ whose image represent a homology class $\beta \in H_2 (\CX, \Q)$.
It can be decomposed into several components $\OL\CM_{g,n,\beta}(\CX, \mathbf{g})$parametrized by $n$-tuple $\mathbf{g} = (g_1, \cdots g_n)$ of conjugacy classes $(g_i)$.
Using Kuranishi perturbation technique, Chen and Ruan \cite{CR2} showed that these moduli spaces admit virtual fundamental classes
$$[\OL\CM_{g,n,\beta}(\CX, \mathbf{g})]^{vir} \in H_*(\OL\CM_{g,n,\beta}(\CX, \mathbf{g}), \Q)$$
where $\ast=2c_1(T\CX)\cap \beta + 2(dim_{\C}\CX-3)(1-g)+2n-2\sum_{i=1}^n\iota_{(g_i)}$.

\bigskip
Let us consider the case of $\CX = \PB$.
We prove that any nonconstant holomorphic orbi-spheres contributing to the $4$-point Gromov-Witten invariants is an orbifold covering map.
Note that for any map $u$ contributing to $\WT{\mathcal{M}}_{0,4,d} (\D{1} , \D{j}, \D{k}, \D{l})$, the degree shifting number $\iota_{(g_i)}$ at each marked point is $\frac{1}{2}$.
Hence
\begin{equation*}\label{eq:vdimzero}
\sum_{i=1}^4 \iota_{(g_i)} = 2.
\end{equation*}

Now we give a proof of Lemma \ref{lem:RHformula}, which is a simple modification of the proof of Lemma 4.2 in \cite{HS}. For simplicity, we will use $\mathcal{C}$ and $\CX$ to denote the domain and the target space of the map $u : \PB(\Lambda) \to \PB(\Lambda_0)$ (so, $u: \mathcal{C} \to \CX$).
\begin{proof}[Proof of Lemma \ref{lem:RHformula}]
As before, let $\{ z_1, \cdots, z_4\}$ and $\{ w_1, \cdots, w_4 \}$ be the orbifold marked points in $\mathcal{C}$ and the orbifold singular points in $\CX$, respectively.
Since $u$ maps $z_i$ in the domain to an orbifold singular point in $\CX$, there is a function $I : \{1, \cdots, 4\} \to \{1, \cdots, 4\}$ such that $u(z_i) = w_{I(i)} \in {\CX}_{(g_i)}$.
We denote the number of points in $u^{-1}(w_i) - \{z_1, \cdots, z_k \}$ by  $m(w_i)$.

Let $U_i$ be an open neighborhood of $z_i$ in $\mathcal{C}$ with a uniformizing chart $(\WT{U}_i, \Z_2, br_i)$ where $br_i : z \mapsto z^{2}$, and let $V$ be an open neighborhood of $w_{I(i)}$ uniformized by $(\WT{V}, \Z_{2}, br_i)$ for $br_i : z \mapsto z^{2}$.
There is a local holomorphic lifting $\WT{u}$ of $u$ such that
\begin{equation*}
\xymatrix{
\WT{U}_i \ar[r]^{\WT{u}} \ar[d]_{br_i} & \WT{V} \ar[d]^{br}	\\
U_i \ar[r]_u & V
}
\end{equation*}
commutes.
Then, from the equivariance with respect to $u_{\#} : \Z_{2} \stackrel{id}{\to} \Z_{2}$, $\WT{u} (z) = z^{2 a_i + 1}$ for some $a_i \in \N_{\geq 0}$.

Since any orbifold Riemann surface is analytically isomorphic to the underlying Riemann surface, the orbifold map $u$ can be regarded as a branched covering map from $\mathbb{P}^1$ to itself. For example, one can use a coordinate $\underline{z} = z^2$ locally around a marked point $z_i$ (where $z$ is a coordinate on $\WT{U}_i$). Then we have $u(\underline{z}) = \underline{z}^{2 a_i +1}$ downstairs. We will apply Riemann-Hurwitz formula to $u$ viewed as a map between $\mathbb{P}^1$.

The ramification index at $z_i$ is $2 a_i + 1$ for $i = 1, \cdots, 4$.
Except these orbifold singular points, other points in the inverse image $u^{-1} (w_i)$ for $i = 1, \cdots, 4$ has ramification index which is an even positive integer $2 e^i_j$ for some $e^i_j \in \N$ ($j = 1, \cdots, m(w_4)$).
The Riemann-Hurwitz formula for $u$ tells us that
\begin{align}\label{eq:RHur}
2 \leq 2d - \left\{ \sum_{i=1}^4 2 a_i + \sum_{i=1}^{4} \sum_{j=1}^{m(w_i)}(2 e_j^i - 1)  \right\}
\end{align}
where $d$ is the degree of $u$. (Here, $2$ in the left hand side is the topological Euler characteristic of $\C\mathrm{P}^1$.) If $u$ does not have any branching outside $\cup_{i=1}^4 u^{-1}(w_i)$, then the equality holds in \eqref{eq:RHur}. Since $d$ is the weighted count of the  number of points in the fiber $u^{-1} (w_i)$ of $u$, we have
\begin{equation}\label{eq:ddeginv}
d = \sum_{j \in I^{-1}(i)}(2 a_j + 1) + 2 \sum_{j = 1}^{m(w_i)} e^i_j
\end{equation}
for each $i = 1, \cdots, 4$.
Plugging \eqref{eq:ddeginv} into \eqref{eq:RHur}, we have inequality
\begin{equation}\label{eq:ddeginv1}
2d \leq 2 + \sum_{i = 1}^{4} m(w_i).
\end{equation}

Equation \eqref{eq:ddeginv} together with $e^i_j \geq 1$ implies that
\begin{align}\label{eq:ddeginv2}
d \geq \sum_{j \in I^{-1}(i)}(2 a_j + 1) + 2 m(w_i),
\end{align}
for each $i = 1, \cdots, 4$.
Combining \eqref{eq:ddeginv1} and \eqref{eq:ddeginv2},
\begin{equation*}\label{eq:main}
\begin{array}{l}
\sum_{i=1}^4 a_i \leq 0.
\end{array}
\end{equation*}
Therefore all $a_i$'s are zero from the non-negativity of $a_i$'s.

If we do not use the inequality \eqref{eq:ddeginv2} and proceed, we have the following more precise estimate
\begin{align*}
\sum_{i=1}^4 \sum_{j=1}^{m(i)} e^i_j \leq \sum_{i=1}^4 m(i)
\end{align*}
which implies $e^i_j = 1$ for all $i,j$. We conclude that $u$ is an orbifold covering map.

\end{proof}

%----------------------------------------------------------

%----------------------------------------------------------
\bibliographystyle{amsalpha}

\end{document}